\documentclass[12pt, reqno]{amsart}
\usepackage{amsmath}

\usepackage{amsfonts}
\usepackage{amssymb}
\usepackage{mathrsfs}
\usepackage{comment}
\usepackage{graphicx}
\usepackage{xcolor}
\usepackage{enumitem}
\usepackage{physics}

\usepackage[margin=1in,footskip=0.25in]{geometry}
\newcommand{\di}{\,\mathrm{d}}
\providecommand{\U}[1]{\protect\rule{.1in}{.1in}}
\newtheorem{theorem}{Theorem}

\newtheorem{definition}[theorem]{Definition}

\newtheorem{lemma}[theorem]{Lemma}

\newtheorem{proposition}[theorem]{Proposition}
\newtheorem{remark}[theorem]{Remark}

\begin{document}

%\linenumbers

\title{Integral Operators on Fractional Ces\`aro--Morrey Spaces over $\mathbb{Q}_p$}

\author[Q. Jahan]{Qaiser Jahan}
\address[Q. Jahan]{School of Mathematical and Statistical Sciences,  Indian Institute of Technology Mandi, Kamand (H.P.), 175005,  India}
\email{qaiser@iitmandi.ac.in}

\author[R. Saini]{Rishabh Saini}

\address[R. Saini]{School of Mathematical and Statistical Sciences,  Indian Institute of Technology Mandi, Kamand (H.P.), 175005,  India}
\email{d23030@students.iitmandi.ac.in}

 \keywords{$p$-adic field, integral operator, Ces\`aro function spaces, Morrey space .}
\subjclass[2010]{11F85, 47G10}
\begin{abstract}
In this paper, we introduce fractional Ces\`aro--Morrey function spaces over $p$-adic fields and investigate their fundamental properties. We first study the behavior of dilation operators on these spaces and establish a Minkowski-type integral inequality. We then prove the boundedness of $p$-adic Hardy--Hilbert--type integral operators on fractional Ces\`aro--Morrey function spaces. As an applications, we derive the $p$-adic Hardy inequality, the Hilbert inequality, and the Hardy--Littlewood--P\'{o}lya inequality. Finally, we establish the boundedness of the Erd\'elyi--Kober  fractional integral operator and the Hadamard fractional integral operator on these spaces.
\end{abstract}
%%%%%%%%%%%%%%%%%%%%%%%%%%%%%%%%%%%%%%%%%%%%%%%%%%%%%%%%%%%%%%%%%%%%%
\maketitle

\section{Introduction}

The Ces\`aro sequence spaces arise from the classical Ces\`aro mean, introduced by Ernesto Ces\`aro in the nineteenth century as a method for summability of divergent series. For $1\le p<\infty$, the Ces\`aro sequence space $ces_p$ consists of all sequences $x=(x_k)_{k\ge1}$ such that
\[
\|x\|_{ces_p}
=
\left(
\sum_{n=1}^{\infty}
\left(
\frac{1}{n}\sum_{k=1}^{n}|x_k|
\right)^p
\right)^{1/p}
<\infty,
\]
and when $p=\infty$
\[
\|x\|_{ces_\infty}
=
\sup_{n\in\mathbb{N}}
\frac{1}{n}\sum_{k=1}^{n}|x_k|
<\infty .
\]

Motivated by the sequence setting, the corresponding function space analogue was introduced. Let $I=[0,1]$ or $I=[0,\infty)$. The Ces\`aro function space $Ces_p(I)$ consists of all Lebesgue measurable functions $f$ on $I$ satisfying
\[
\|f\|_{Ces_p(I)}
=
\left(
\int_I
\left(
\frac{1}{x}\int_0^x |f(t)|\,dt
\right)^p
dx
\right)^{1/p}
<\infty,
\qquad 1\le p<\infty,
\]
and
\[
\|f\|_{Ces_\infty(I)}
=
\sup_{x\in I,\;x>0}
\frac{1}{x}\int_0^x |f(t)|\,dt
<\infty .
\]

The Ces\`aro function spaces $Ces_r(I)$, defined on the intervals $I=[0,1]$ or $I=[0,\infty)$, form an important class of Banach function spaces that arise naturally in connection with averaging operators and integral inequalities. These spaces may be viewed as continuous counterparts of Ces\`aro sequence spaces. The study of Ces\`aro function spaces gained significant attention after a problem posed by the Dutch
Mathematical Society in 1968 on the characterization of their dual spaces \cite{AS1}. Since then, many of their fundamental properties have been investigated. In particular, Ces\`aro function spaces are, in general, neither rearrangement invariant nor reflexive, and their geometric structure differs substantially from that of the classical Lebesgue spaces. These properties make them a useful setting for the study of averaging operators, integral inequalities, and related
problems in analysis. For further results on the duality theory, geometric properties, and other aspects of Ces\`aro function spaces, we refer the reader to
\cite{AI,AS1,AS2,AS3,AS4,AS5,KT,KL2,KL3}.

%Closely related to Ces\`aro spaces are the Copson spaces and the Tandori spaces, which arise from similar averaging constructions and share several analytical properties with Ces\`aro spaces. These spaces have been investigated in connection with operator theory, interpolation, and embedding problems.

Another important class of function spaces is the Morrey spaces, introduced by Morrey in his study of the regularity of solutions to quasilinear elliptic partial differential equations. Since then, Morrey spaces have become a fundamental tool in harmonic analysis and partial differential equations, particularly in the study of the local behavior of functions. Over the years, several generalizations of Morrey spaces have been introduced, including generalized Morrey spaces, Orlicz--Morrey spaces, Morrey--Lorentz spaces, and
Morrey--Banach spaces \cite{GS2,GS1,GS3,NE1,NE2,SY}.

%Motivated by these developments, it is natural to consider Morrey type structures associated with Ces\`aro spaces. In particular, Ces\`aro Morrey spaces combine the averaging behavior of Ces\`aro operators with the local control inherent in Morrey spaces and provide a refined framework for studying integral operators and related inequalities.

The combination of Ces\`aro and Morrey spaces leads naturally to the Ces\`aro--Morrey spaces, which have been studied in the Euclidean setting. These spaces combine the averaging structure of Ces\`aro spaces with the local control provided by Morrey spaces, making them well suited for the study of integral operators and related inequalities see \cite{HO2, RSJ}.

A central problem in the theory of Ces\`aro-type spaces concerns the boundedness of integral operators acting on these spaces. In the classical setting on $[0,\infty)$ the Ces\`aro norm is closely connected with Hardy type inequalities, which has motivated the development of general criteria for operator boundedness. In this direction, Ho \cite{HO2} developed a unified approach based on the behavior of dilation operators together with Minkowski-type inequalities. This method provides a convenient framework for establishing the boundedness of a broad class of integral operators, including the Hilbert operator, the Erdélyi--Kober fractional integral operator, and Mellin-type fractional operators.

Although the boundedness theory of integral operators on Ces\`aro--Morrey spaces has been extensively developed in the classical Euclidean setting, to the best of our knowledge, the theory of fractional Ces\`aro--Morrey spaces over the $p$-adic field $\mathbb{Q}_p$ has not yet been developed. Even though the definition of fractional Ces\`aro--Morrey spaces over $\mathbb{Q}_p$ is motivated by the classical setting, the corresponding analysis is fundamentally different. This is due to the non-Archimedean nature of the $p$-adic absolute value, which induces an ultrametric geometry that differs
significantly from the Euclidean setting. In particular, the
decomposition of $\mathbb{Q}_p$ into $p$-adic spheres replaces
continuous radial integration by discrete summation formulas.
Consequently, the boundedness of integral operators is characterized by discrete summability conditions on the kernel rather than integral conditions, and many arguments from the Euclidean setting cannot be applied directly.

A related study of Ces\`aro-type function spaces over $\mathbb Q_p$ was recently initiated in \cite{RSJ}, where $p$-adic Ces\`aro function spaces and the boundedness of certain integral operators on these spaces were investigated. The present work is motivated by this development but is carried out in a broader functional framework. By introducing a fractional parameter together with a Morrey-type structure, we obtain a new class of fractional Ces\`aro--Morrey spaces over $\mathbb Q_p$. The $p$-adic Ces\`aro spaces considered in the earlier work arise naturally as a special case of the present framework. This broader setting enables us to investigate the boundedness of integral operators on spaces that combine fractional averaging with the Morrey-space structure.

In this paper we introduce and study fractional Ces\`aro--Morrey spaces over the $p$-adic field $\mathbb{Q}_p$. We first investigate the associated dilation operators and establish a Minkowski-type integral inequality, which plays a key role in the proofs of our main results. We then prove a general boundedness theorem for a class of $p$-adic Hardy--Hilbert-type integral operators, yielding a unified boundedness criterion in terms
of a discrete summability condition on the kernel. As applications, we establish the boundedness of the $p$-adic Hardy operator, the $p$-adic Hilbert operator, the $p$-adic Hardy--Littlewood--P\'olya operator, and a $p$-adic analogue of the Erd\'elyi--Kober fractional integral operator \cite{KVS,SI} on fractional Ces\`aro--Morrey spaces over
$\mathbb{Q}_p$.

The paper is organized as follows. In Section~2 we recall several basic notions from $p$-adic analysis that will be needed in the sequel. Section~3 introduces fractional Ces\`aro--Morrey spaces over $\mathbb{Q}_p$ together with the associated dilation operators and their basic properties. In Section~4 we establish the main boundedness theorem for integral operators on these spaces. Section~5 is devoted to applications of this result, where we obtain $p$-adic versions of Hardy-type, Hilbert-type, and Hardy–Littlewood–Pólya inequalities. Finally, we define the $p$-adic Erd\'elyi--Kober fractional integral operator and prove its boundedness on fractional Ces\`aro--Morrey spaces. In the last section, we introduce the $p$-adic Hadamard-type fractional integral operator and establish its boundedness on the same spaces.

%%%%%%%%%%%%%%%%%%%%%%%%%%%%%%%%%%%%%%%%%%%%%%%%%%%%%%%%%%%%%%%%%%%%%%%%%%%%%%%%%%%%%%%%%%%%%%%%%%%%%%%%%

\section{Preliminaries}\label{ch4_S2}

In this section we recall several basic notions from $p$-adic analysis that will be used throughout the paper. Our purpose is mainly to establish notation and summarize those properties of the field $\mathbb{Q}_p$ that are needed for the study of fractional Ces\`aro--Morrey spaces and the associated integral operators. For further details and proofs of the results stated here we refer the reader to the standard references \cite{BJ3}.

Let $p$ be a fixed prime number. The field of $p$-adic numbers $\mathbb{Q}_p$ is obtained by completing the rational numbers $\mathbb{Q}$ with respect to the $p$-adic absolute value $|\cdot|_p$. Every nonzero rational number $x$ can be written in the form
\[
x=p^{\ell}\frac{m'}{n'},
\]
where $\ell,m',n'\in\mathbb{Z}$ and neither $m'$ nor $n'$ is divisible by $p$. The $p$-adic norm $|\cdot|_p:\mathbb{Q}\to\mathbb{R}$ is defined by
\[
|x|_p=
\begin{cases}
p^{-\ell}, & x=p^{\ell}\dfrac{m'}{n'},\\
0, & x=0 .
\end{cases}
\]

A characteristic feature of the $p$-adic norm is that it is non Archimedean. In particular, it satisfies the ultrametric inequality
\[
|x+y|_p\le \max\{|x|_p,|y|_p\}, \qquad x,y\in\mathbb{Q}_p.
\]
Moreover, whenever $|x|_p\ne |y|_p$, the larger of the two norms dominates and therefore
\[
|x+y|_p=\max\{|x|_p,|y|_p\}.
\]

Every nonzero element $x\in\mathbb{Q}_p$ admits a unique expansion of the form
\begin{equation}\label{ch4_s1}
x=p^{\ell}\sum_{j=0}^{\infty}c_jp^{j},
\end{equation}
where $c_j\in\mathbb{Z}/p\mathbb{Z}$ and $c_0\neq0$. The series in \eqref{ch4_s1} converges in the $p$-adic norm since
\[
|p^{\ell}c_jp^j|_p\le p^{-\ell-j}\to0
\quad \text{as } j\to\infty .
\]

For $a\in\mathbb{Q}_p$ and $\ell\in\mathbb{Z}$ we define the $p$-adic ball and sphere by
\[
B^{\ell}(a)=\{x\in\mathbb{Q}_p:|x-a|_p\le p^{\ell}\},
\qquad
S^{\ell}(a)=\{x\in\mathbb{Q}_p:|x-a|_p=p^{\ell}\}.
\]
When the center is the origin we simply write
\[
B^{\ell}=B^{\ell}(0), \qquad S^{\ell}=S^{\ell}(0).
\]
With this notation one has the decompositions
\[
\mathbb{Q}_p=\bigcup_{\ell=-\infty}^{\infty}B^{\ell},
\qquad
\mathbb{Q}_p^\ast=\mathbb{Q}_p\setminus\{0\}
=\bigcup_{\ell=-\infty}^{\infty}S^{\ell}.
\]

Since the additive group of $\mathbb{Q}_p$ is a locally compact Abelian group, it admits a translation invariant Haar measure $dx$. We normalize this measure by requiring that the unit ball has measure one, namely
\[
|B^0|=\int_{B^0}dx=1,
\]
where $|E|$ denotes the Haar measure of a measurable set $E\subset\mathbb{Q}_p$. With this normalization the measures of balls and spheres are given by
\[
|B^{\ell}|=p^{\ell}, \qquad |S^{\ell}|=p^{\ell}(1-p^{-1}), \qquad \ell\in\mathbb{Z}.
\]

We finally recall a useful scaling property of the Haar measure, which plays an important role in the analysis of dilation invariant operators. For any $\xi\in\mathbb{Q}_p^\ast$ and any integrable function $f:\mathbb{Q}_p\to\mathbb{C}$, the change of variables $x=\xi y$ yields
\[
\int_{\mathbb{Q}_p}f(\xi x)\,dx
=
|\xi|_p^{-1}
\int_{\mathbb{Q}_p}f(x)\,dx.
\]
% Equivalently, for every measurable set $E\subset\mathbb{Q}_p$ one has
% \[
% |\xi E|=|\xi|_p\,|E|,
% \qquad
% \xi E=\{\xi x:x\in E\}.
% \]
% In particular multiplication by $\xi$ acts as a dilation on $p$-adic balls and satisfies
% \[
% \xi B^{\ell}=B^{\ell+v_p(\xi)},
% \]
% where $v_p(\xi)$ denotes the $p$-adic valuation of $\xi$.

These basic properties of the $p$-adic norm and Haar measure will be used repeatedly in the sequel, especially in the study of dilation operators and integral operators acting on fractional Ces\`aro--Morrey spaces over $\mathbb{Q}_p$.

%%%%%%%%%%%%%%%%%%%%%%%%%%%%%%%%%%%%%%%%%%%%%%%%%%%%%%%%%%%%%%

\section{\textbf{Fractional Ces\`aro--Morrey spaces over $p$-adic field}}

In this section we introduce the fractional Ces\`aro--Morrey function spaces over the $p$-adic field $\mathbb{Q}_p$. After presenting the definition of these spaces, we investigate several basic properties that will be needed in the later sections. In particular, we study the action of dilation operators on fractional Ces\`aro--Morrey spaces and obtain the corresponding boundedness estimates. 
We also establish a Minkowski type integral inequality adapted to the present setting. These preliminary results provide the analytical tools required for the operator boundedness results that will be developed in the subsequent sections.

\begin{definition}
    Let $ 1 \leq r < \infty.$ The $p$-adic Lebesgue space  $L^r(\mathbb{Q}_p)$ consists of all measurable functions $f$ on $\mathbb{Q}_p$ such that 
\begin{align*}
\|f\|_{L^r(\mathbb{Q}_p)} = \Bigg( \int_{\mathbb{Q}_p}|f(x)|^r \di x\Bigg)^{1/r} < \infty.
\end{align*}
\end{definition}

The space $L_{\mathrm{loc}}^r(\mathbb{Q}_p)$ is defined as the set of all measurable functions $f$ on $\mathbb{Q}_p$ satisfying $\int_{K}|f(x)|^r \di x < \infty,$ for any compact subset $K$ of $\mathbb{Q}_p.$
%\begin{definition}
%Let $1 \leq r < \infty$ and if $f \in L_{\mathrm{loc}}^r(\mathbb{Q}_p)$ Then the $p$-adic Hardy operator  $\mathscr{H}^pf$ is defined by
%\begin{align*}
%\mathscr{H}^p f(x)
%=
%\frac{1}{\mu(B(0,|x|_p))} 
%\int_{B(0,|x|_p)} |f(y)|\, \di\mu(y)
%=
%\frac{1}{|x|_p} \int_{|y|_p\le |x|_p} |f(y)|\, \di\mu(y).. 
%\end{align*}
%\end{definition}
%\begin{definition}
% Let $f:\mathbb{Q}_p \to \mathbb{C}$ be a measurable function, the Copson operator
%$\mathscr{H}^{\ast,p}$ is defined by
%\[
%(\mathscr{H}^{\ast,p} f)(x)
%:=
%\int_{\{y \in \mathbb{Q}_p : |y|_p \ge |x|_p\}}
%\frac{f(y)}{|y|_p}\,dy,
%\qquad x \in \mathbb{Q}_p \setminus \{0\}.
%\]
%\end{definition}
\begin{definition} \label{Def:1} Let $1 \leq r < \infty.$ The $p$-adic Ces\`aro function spaces $\mathrm{Ces}_{r}(\mathbb{Q}_p)$ consists of all measurable functions $f$ on $\mathbb{Q}_p$ such that

%$f:\mathbb{Q}_p\to\mathbb{R}$ be measurable.  
%Define the p-adic Cesàro function space
%\begin{align*}
%Ces_{r}(\mathbb{Q}_p)
%=
%\left\{
%f:\mathbb{Q}_p\to\mathbb{R}~~ \mu-\text{ measurable} :
%\|f\|_{Ces_{r}(\mathbb{Q}_p)} < \infty
%\right\},
%\end{align*} 
%where the norm is
\begin{align} \label{Ces:norm1}
    \|f\|_{\mathrm{Ces}_{r}(\mathbb{Q}_p)}=\|\mathscr{H}^p (|f|)\|_{L^{r}(\mathbb{Q}_p)}=\left( \int_{\mathbb{Q}_p} \left( \frac{1}{|x|_p} \int_{|y|_p \leq |x|_p} |f(y)|\, \di y \right)^r \di x\right)^{1/r} < \infty.
\end{align}
\end{definition}

For any locally integrable function $f$ on $\mathbb{Q}_p,$ the $p$-adic Hardy operator is defined as 
\begin{align} \label{op:Hardy}
\mathscr{H}^p f(x) = \frac{1}{|x|_p} \int_{|y|_p \leq |x|_p} f(y) \di y,~~~  x \in \mathbb{Q}^*_p.
\end{align}
%$p$-adic Hardy inequality established in \cite{}(see Corollary 2.2) yields the embedding $L^{r}(\mathbb{Q}_p) \hookrightarrow \mathrm{Ces}_{r}(\mathbb{Q}_p),~~~~~ r \in (1, \infty).$

\begin{definition}
    Let $f$ be a locally integrable function on $\mathbb{Q}_p$ and let the order 
$0\le \beta <1$. We define the $p$-adic fractional Hardy operator by
\[
\mathscr{H}_p^{\beta}f(x)
= \frac{1}{|B(0,|x|_p)|^{1-\beta}}
\int_{B(0,|x|_p)} f(t)\,dt
= \frac{1}{|x|_p^{1-\beta}}
\int_{|t|_p\le |x|_p} f(t)\,dt,
\qquad x\in \mathbb{Q}_p^* .
\]

%It is bounded on $L^r(\mathbb{Q}_p)$ for $1<r<\infty$.
\end{definition}
  \medskip
\begin{definition}
Let $1\le r<\infty$ and $0\le\beta<1$. The fractional
Ces\`aro function space on $\mathbb{Q}_p$, denoted by
$\mathrm{Ces}_r^\beta(\mathbb{Q}_p)$, consists of all
$f\in L_{\mathrm{loc}}^r(\mathbb{Q}_p)$ such that
\[
\|\mathscr H_p^\beta(|f|)\|_{L^r(\mathbb Q_p)}<\infty.
\]
The corresponding norm is
\begin{equation}\label{FC:norm}
\|f\|_{\mathrm{Ces}_r^\beta(\mathbb Q_p)}
=
\|\mathscr H_p^\beta(|f|)\|_{L^r(\mathbb Q_p)}
=
\left(
\int_{\mathbb Q_p}
\left(
\frac{1}{|x|_p^{1-\beta}}
\int_{B(0,|x|_p)}|f(t)|\,dt
\right)^r dx
\right)^{1/r}.
\end{equation}
\end{definition}
%Hence the space $\mathrm{Ces}_r^{\beta}(\mathbb{Q}_p)$ is well defined.

\medskip

%For $a\in\mathbb{Q}_p$ and $\ell\in\mathbb{Z}$, define
%\[
%B=B_\ell(a)=\{x\in\mathbb{Q}_p:|x-a|_p\le p^\ell\},
%\]
%which denotes the ball of radius $p^\ell$ centered at $a$.

\begin{definition}
Let $1\le r\le q<\infty$. The $p$-adic Morrey space $M_{r,q}(\mathbb{Q}_p)$ consists of all measurable functions $f$ on $\mathbb{Q}_p$ such that
\[
\|f\|_{M_{r,q}(\mathbb{Q}_p)}
=
\sup_{\substack{a\in\mathbb{Q}_p\\ \ell\in\mathbb{Z}}}
|B^\ell(a)|^{\frac{1}{q}-\frac{1}{r}}
\left(
\int_{B^\ell(a)} |f(t)|^q\,dt
\right)^{\frac{1}{q}}
<\infty.
\]
\end{definition}

% Since $|B^\ell(a)|=p^\ell$, the above expression can also be written as
% \[
% \|f\|_{M_{q,r}(\mathbb{Q}_p)}
% =
% \sup_{\substack{a\in\mathbb{Q}_p \\ \ell\in\mathbb{Z}}}
% p^{\ell\left(\frac{1}{q}-\frac{1}{r}\right)}
% \left(
% \int_{B^\ell(a)} |f(x)|^{r}\,dx
% \right)^{\frac{1}{r}} .
% \]

\medskip

\begin{remark} We have following remarks.
    \begin{enumerate}
\item If $r=q$, then
\[
M_{r,r}(\mathbb{Q}_p)=L^{r}(\mathbb{Q}_p).
\]

\item If $r<q$, then
\[
L^{q}(\mathbb{Q}_p)\subset M_{r,q}(\mathbb{Q}_p)
\subset L^{r}_{\mathrm{loc}}(\mathbb{Q}_p).
\]
\end{enumerate}

\end{remark}

\medskip

Next we are going to define the  Ces\`aro--Morrey space and fractional Ces\`aro--Morrey space over $\mathbb{Q}_p$.

\begin{definition}
Let $1\le r\le q<\infty$. The $p$-adic Ces\`aro--Morrey space, denoted by $C_{M_{r,q}}(\mathbb{Q}_p)$, consists of all locally integrable functions $g$ on $\mathbb{Q}_p$ such that
\[
\|g\|_{\mathrm{Ces}\,M_{r,q}(\mathbb{Q}_p)}
:=
\|\mathscr H_p(|g|)\|_{M_{r,q}(\mathbb{Q}_p)}
<\infty.
\]
Equivalently,
\[
\|g\|_{\mathrm{Ces}\,M_{r,q}(\mathbb{Q}_p)}
=
\sup_{\substack{a\in\mathbb{Q}_p\\ \ell\in\mathbb Z}}
|B^\ell(a)|^{\frac1q-\frac1r}
\left(
\int_{B^\ell(a)}
\left(
\frac1{|B(0,|x|_p)|}
\int_{|t|_p\le |x|_p}|g(t)|\,dt
\right)^r dx
\right)^{1/r}
<\infty.
\]
\end{definition}

\begin{definition}\label{def:FCM}
Let $1\le r\le q<\infty$ and $0\le \beta<1$. The fractional
Ces\`aro--Morrey space on $\mathbb{Q}_p$, denoted by
$C^{\beta}M_{r,q}(\mathbb{Q}_p)$, consists of all locally
integrable functions $f$ on $\mathbb{Q}_p$ such that
\[
\|f\|_{C^{\beta}M_{r,q}(\mathbb{Q}_p)}
:=
\|\mathscr{H}_p^{\beta}(|f|)\|_{M_{r,q}(\mathbb{Q}_p)}
<\infty.
\]

Equivalently,
\begin{equation}\label{FCM:norm1}
\begin{aligned}
\|f\|_{C^{\beta}M_{r,q}(\mathbb{Q}_p)}
=
\sup_{\substack{a\in\mathbb{Q}_p\\ \ell\in\mathbb{Z}}}
&|B^\ell(a)|^{\frac{1}{q}-\frac{1}{r}}
\left(
\int_{B^\ell(a)}
\left(
\frac{1}{|B(0,|x|_p)|^{\,1-\beta}}
\int_{|t|_p\le |x|_p}|f(t)|\,dt
\right)^r
dx
\right)^{\frac{1}{r}}.
\end{aligned}
\end{equation}
\end{definition}
For brevity, we shall use the notation
$C_{M_{r,q}}^{\beta}(\mathbb{Q}_p)$ for the fractional
Ces\`aro--Morrey space over $\mathbb{Q}_p$.

\begin{remark}
The fractional Ces\`aro--Morrey space introduced above includes the
Ces\`aro--Morrey space, the fractional Ces\`aro space, and the
Ces\`aro function space over $\mathbb{Q}_p$ as special cases.

\begin{enumerate}
\item[(i)] If $\beta=0$, then
\[
C^{0}M_{r,q}(\mathbb{Q}_p)
=
C_{M_{r,q}}(\mathbb{Q}_p).
\]

\item[(ii)] If $r=q$, then
\[
C^{\beta}M_{r,r}(\mathbb{Q}_p)
=
\mathrm{Ces}_{r}^{\beta}(\mathbb{Q}_p).
\]
\item[(iii)] If $r=q$ and $\beta=0$, then
\[
C^{0}M_{r,r}(\mathbb{Q}_p)
=
\mathrm{Ces}_{r}(\mathbb{Q}_p).
\]

\end{enumerate}
\end{remark}

\begin{proposition}
Let $1\le r\le q<\infty$ and $0\le \beta<1$ If $f\in C_{M_{r,q}}^{\beta}(\mathbb{Q}_p)$, then
\[
\mathscr{H}_p^{\beta}(|f|)
\in L^{r}_{\mathrm{loc}}(\mathbb{Q}_p).
\]
\end{proposition}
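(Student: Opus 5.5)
The plan is to read the conclusion directly off the Morrey norm in \eqref{FCM:norm1}: each single ball in the supremum already gives a finite $L^r$ bound on that ball, and compact sets are covered by finitely many such balls.

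Let $K\subset\mathbb{Q}_p$ be compact and set $g=\mathscr{H}_p^{\beta}(|f|)$, a nonnegative measurable function on $\mathbb{Q}_p^*$. Since $\{0\}$ has Haar measure zero, the value of $g$ at the origin is irrelevant for integrability. Because $K$ is bounded in $|\cdot|_p$, there is $\ell\in\mathbb{Z}$ with $K\subset B^{\ell}(0)$. It therefore suffices to show that $\int_{B^\ell(0)} g(x)^r\,dx<\infty$ for every $\ell\in\mathbb{Z}$.

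Fix $\ell$ and take $a=0$ in the supremum \eqref{FCM:norm1}. This gives
\[
|B^{\ell}(0)|^{\frac1q-\frac1r}\left(\int_{B^{\ell}(0)} g(x)^r\,dx\right)^{1/r}\le \|f\|_{C^{\beta}M_{r,q}(\mathbb{Q}_p)}<\infty .
\]
Since $|B^\ell(0)|=p^{\ell}$, we can rearrange to
\[
\left(\int_{B^{\ell}(0)} g^r\,dx\right)^{1/r}\le p^{\ell\left(\frac1r-\frac1q\right)}\,\|f\|_{C^{\beta}M_{r,q}(\mathbb{Q}_p)}<\infty .
\]
The factor $p^{\ell(1/r-1/q)}$ is a finite positive number, and the exponent is nonnegative because $r\le q$. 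Hence $\int_K g^r\,dx\le\int_{B^\ell(0)}g^r\,dx<\infty$, which means $g\in L^r_{\mathrm{loc}}(\mathbb{Q}_p)$.

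There is essentially no obstacle here. The only point needing care is measurability of $g$. This follows because $g$ is constant on each sphere $S^{k}$: it equals $p^{-k(1-\beta)}\int_{B^k}|f|$ there, and that integral is finite since $f$ is locally integrable. So $g$ is a countable sum of constants times indicator functions of the sets $S^k$, and is therefore measurable.
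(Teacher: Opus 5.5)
Your argument is correct and is essentially the paper's proof: one ball from the Morrey supremum bounds $\int_{B}\bigl(\mathscr{H}_p^{\beta}(|f|)\bigr)^r\,dx$ by $|B|^{1-r/q}\|f\|_{C^{\beta}M_{r,q}(\mathbb{Q}_p)}^r<\infty$, and every compact set lies in some ball (you fix the center $a=0$, which suffices). Your measurability check, using that $\mathscr{H}_p^{\beta}(|f|)$ is constant on each sphere $S^k$, is a harmless addition that the paper leaves implicit.
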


\begin{proof}
Let $f\in C_{M_{r,q}}^{\beta}(\mathbb{Q}_p)$. Then by definition
\[
\mathscr{H}_p^{\beta}(|f|) \in M_{r,q}(\mathbb{Q}_p),
\]
and hence

\begin{align}\label{**}
    \|\mathscr{H}_p^{\beta}(|f|)\|_{M_{r,q}(\mathbb{Q}_p)}<\infty. 
\end{align}

Thus,
\[
\sup_{\substack{a\in\mathbb{Q}_p\\ \ell\in\mathbb{Z}}}
|B^\ell(a)|^{\frac{1}{q}-\frac{1}{r}}
\left(
\int_{B^\ell(a)}
\left|\mathscr{H}_p^{\beta}(|f|)(x)\right|^r\,dx
\right)^{\frac{1}{r}}
<\infty.
\]

Let $B^\ell(a)\subset\mathbb{Q}_p$ be an arbitrary $p$-adic ball. From the definition of supremum we obtain
\[
|B^\ell(a)|^{\frac{1}{q}-\frac{1}{r}}
\left(
\int_{B^\ell(a)} |\mathscr{H}_p^{\beta}(|f|)(x)|^{r}\,dx
\right)^{\frac{1}{r}}
\le
\|\mathscr{H}_p^{\beta}(|f|)\|_{M_{r,q}(\mathbb{Q}_p)} .
\]

Hence
\[
\left(
\int_{B^\ell(a)} |\mathscr{H}_p^{\beta}(|f|)(x)|^{r}\,dx
\right)^{\frac{1}{r}}
\le
|B^\ell(a)|^{\frac{1}{r}-\frac{1}{q}}
\|\mathscr{H}_p^{\beta}(|f|)\|_{M_{r,q}(\mathbb{Q}_p)} .
\]

Consequently,
\[
\int_{B^\ell(a)} |\mathscr{H}_p^{\beta}(|f|)(x)|^{r}\,dx
\le
|B^\ell(a)|^{1-\frac{r}{q}}
\|\mathscr{H}_p^{\beta}(|f|)\|_{M_{r,q}(\mathbb{Q}_p)}^{r}.
\]

By \eqref{**} it follows that
\[
\int_{B^\ell(a)} |\mathscr{H}_p^{\beta}(|f|)(x)|^{r}\,dx < \infty .
\]

Since every compact subset $K$ of $\mathbb{Q}_p$ is contained in
some $p$-adic ball $B^\ell(a)$, it follows that
\[
\int_K
\left|\mathscr{H}_p^{\beta}(|f|)(x)\right|^r\,dx
<\infty.
\]

Therefore,
\[
\mathscr{H}_p^{\beta}(|f|)
\in L^r_{\mathrm{loc}}(\mathbb{Q}_p).
\]
\end{proof}

%We now study the dilation operator on $p$-adic Ces\`aro function spaces. 
For any measurable function $f$ on $\mathbb{Q}_p$ and $\xi \in \mathbb{Q}_p^{*}$, 
the dilation operator is defined by
\begin{align*}
(D_{\xi}f)(x) = f(\xi x), \qquad x \in \mathbb{Q}_p .
\end{align*}
In the following lemmas we investigate the action of the dilation operator on fractional Ces\`aro--Morrey spaces over $\mathbb{Q}_p$.

\begin{lemma}
For every $\xi\in\mathbb{Q}_p^\ast$, the fractional Hardy operator
$\mathscr{H}_p^\beta$ satisfies
\begin{align}\label{*}
\mathscr{H}_p^\beta(|D_\xi f|)(x)
=
|\xi|_p^{-\beta}
\mathscr{H}_p^\beta(|f|)(\xi x).
\end{align}
\end{lemma}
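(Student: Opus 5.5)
The identity follows from writing out the definition of $\mathscr{H}_p^\beta$ and performing a single change of variables in the inner integral. Since $|D_\xi f|(t)=|f(\xi t)|$, for $x\in\mathbb{Q}_p^\ast$ we have
\[
\mathscr{H}_p^\beta(|D_\xi f|)(x)=\frac{1}{|x|_p^{1-\beta}}\int_{|t|_p\le |x|_p}|f(\xi t)|\,dt .
\]
I would apply the scaling property of Haar measure recalled in Section~2 to the function $g(t)=|f(t)|\,\chi_{\{|t|_p\le|\xi x|_p\}}(t)$. The key observation is multiplicativity of the $p$-adic norm: $|\xi t|_p\le|\xi x|_p$ if and only if $|t|_p\le|x|_p$. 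Hence $g(\xi t)=|f(\xi t)|\chi_{\{|t|_p\le|x|_p\}}(t)$. The scaling formula $\int g(\xi t)\,dt=|\xi|_p^{-1}\int g(t)\,dt$ (stated for integrable $g$, and extended to nonnegative measurable $g$ by monotone convergence) then gives
\[
\int_{|t|_p\le |x|_p}|f(\xi t)|\,dt=|\xi|_p^{-1}\int_{|s|_p\le |\xi x|_p}|f(s)|\,ds .
\]

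Next I would rewrite the prefactor using $|x|_p=|\xi|_p^{-1}|\xi x|_p$, so that
\[
|x|_p^{-(1-\beta)}=|\xi|_p^{1-\beta}\,|\xi x|_p^{-(1-\beta)}.
\]
Combining the two displays gives the factor $|\xi|_p^{1-\beta}\cdot|\xi|_p^{-1}=|\xi|_p^{-\beta}$ multiplying
\[
|\xi x|_p^{-(1-\beta)}\int_{|s|_p\le|\xi x|_p}|f(s)|\,ds=\mathscr{H}_p^\beta(|f|)(\xi x),
\]
which is \eqref{*}. Note that $\xi x\in\mathbb{Q}_p^\ast$ whenever $x\in\mathbb{Q}_p^\ast$, so the right-hand side is defined at the same points as the left.

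There is no real obstacle. The only points needing care are the bookkeeping of exponents and the justification of the change of variables for nonnegative, possibly non-integrable functions. The latter I would handle either by monotone convergence or by noting that local integrability of $f$ makes the truncated integrand integrable. If both sides are $+\infty$, the identity holds trivially in $[0,\infty]$.
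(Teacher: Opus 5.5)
Your proposal is correct and follows the paper's proof: the substitution $s=\xi t$ with $dt=|\xi|_p^{-1}ds$, then rewriting $|x|_p^{-(1-\beta)}=|\xi|_p^{1-\beta}|\xi x|_p^{-(1-\beta)}$ to collect the factor $|\xi|_p^{-\beta}$. Your extra remarks, that multiplicativity of $|\cdot|_p$ transforms the domain of integration and that monotone convergence justifies the scaling formula for nonnegative integrands, supply details the paper leaves implicit.
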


\begin{proof}
    \[
\mathscr{H}_p^{\beta}(|D_\xi f|)(x)
=
\frac{1}{|x|_p^{1-\beta}}
\int_{|t|_p\le |x|_p}
|f(\xi t)|\,dt .
\]

Putting $y=\xi t$, the Haar measure satisfies $dt=|\xi|_p^{-1}dy$. Hence
\[
\mathscr{H}_p^{\beta}(|D_\xi f|)(x)
=
\frac{|\xi|_p^{-1} }{|x|_p^{1-\beta}}
\int_{|y|_p\le |\xi x|_p}
|f(y)|\,dy .
\]

Thus
\[
\mathscr{H}_p^{\beta}(|D_\xi f|)(x)
=
\frac{|\xi|_p^{-1}|\xi|_p^{1-\beta}}{|\xi x|_p^{1-\beta}}
\int_{|y|_p\le |\xi x|_p}
|f(y)|\,dy
=
|\xi|_p^{-\beta} \mathscr{H}_p^{\beta}| f|(\xi x).
\]

\end{proof}

%\hfill $\square$

\medskip

\begin{lemma}\label{lem:dia}
Let $1\le r\le q<\infty$ and $0\le \beta<1$. For every
$\xi\in\mathbb{Q}_p^\ast$, the dilation operator $D_{\xi}$ is
bounded on $C^{\beta}M_{r,q}(\mathbb{Q}_p)$. Moreover,
\[
\|D_{\xi}f\|_{C^{\beta}M_{r,q}(\mathbb{Q}_p)}
=
|\xi|_p^{-\beta-\frac{1}{q}}
\|f\|_{C^{\beta}M_{r,q}(\mathbb{Q}_p)}.
\]
\end{lemma}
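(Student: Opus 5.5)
The plan is to combine the previous lemma, i.e. identity \eqref{*}, with a change of variables inside each Morrey ball, and then to observe that dilation by $\xi$ permutes the family of $p$-adic balls.

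\textbf{Step 1.} By \eqref{*}, for every $x\in\mathbb{Q}_p^\ast$,
\[
\mathscr{H}_p^\beta(|D_\xi f|)(x)=|\xi|_p^{-\beta}\,\mathscr{H}_p^\beta(|f|)(\xi x).
\]
Hence, for a fixed ball $B^\ell(a)$,
\[
\int_{B^\ell(a)}\big(\mathscr{H}_p^\beta(|D_\xi f|)(x)\big)^r dx
=|\xi|_p^{-\beta r}\int_{B^\ell(a)}\big(\mathscr{H}_p^\beta(|f|)(\xi x)\big)^r dx .
\]

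\textbf{Step 2.} Substitute $y=\xi x$, so that $dx=|\xi|_p^{-1}dy$. The image set is
\[
\xi B^\ell(a)=\{y:|y-\xi a|_p\le |\xi|_p p^{\ell}\}=B^{\ell-k}(\xi a),
\]
where $|\xi|_p=p^{-k}$. It has measure $|\xi|_p\,|B^\ell(a)|$. The integral therefore equals
\[
|\xi|_p^{-\beta r-1}\int_{B^{\ell-k}(\xi a)}\big(\mathscr{H}_p^\beta(|f|)(y)\big)^r dy .
\]

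\textbf{Step 3.} Multiply by the Morrey weight and rewrite it using
\[
|B^\ell(a)|^{\frac1q-\frac1r}=|\xi|_p^{-\frac1q+\frac1r}\,|B^{\ell-k}(\xi a)|^{\frac1q-\frac1r}.
\]
Taking $r$-th roots, the powers of $|\xi|_p$ combine as
\[
-\beta-\tfrac1r-\tfrac1q+\tfrac1r=-\beta-\tfrac1q .
\]
This gives
\[
|B^\ell(a)|^{\frac1q-\frac1r}\Big(\int_{B^\ell(a)}\big(\mathscr{H}_p^\beta(|D_\xi f|)\big)^r\Big)^{1/r}
=|\xi|_p^{-\beta-\frac1q}\,|B'|^{\frac1q-\frac1r}\Big(\int_{B'}\big(\mathscr{H}_p^\beta(|f|)\big)^r\Big)^{1/r},
\]
with $B'=B^{\ell-k}(\xi a)$.

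\textbf{Step 4.} Take the supremum over $a\in\mathbb{Q}_p$ and $\ell\in\mathbb{Z}$. The map $(a,\ell)\mapsto(\xi a,\ell-k)$ is a bijection of $\mathbb{Q}_p\times\mathbb{Z}$, with inverse given by dilation by $\xi^{-1}$. So the supremum on the right runs over exactly all balls, and we obtain the claimed equality, not merely an inequality. Boundedness of $D_\xi$ on $C^{\beta}M_{r,q}(\mathbb{Q}_p)$ follows immediately.

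The only delicate points are two, and both are bookkeeping. The first is the bijectivity of balls under dilation, which is what gives equality rather than "$\le$". The second is that the set $\{0\}$, where \eqref{*} is not defined, has measure zero and so does not affect the integrals.
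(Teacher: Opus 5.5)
Your proof is correct and follows the same route as the paper: the identity $\mathscr{H}_p^\beta(|D_\xi f|)(x)=|\xi|_p^{-\beta}\mathscr{H}_p^\beta(|f|)(\xi x)$, the substitution $z=\xi x$ with $|\xi B|=|\xi|_p|B|$, and the exponent count $-\beta-\frac1r-\frac1q+\frac1r=-\beta-\frac1q$. The only difference is how you reach equality. The paper first proves ``$\le$'' and then applies it with $\xi^{-1}$ to $D_{\xi^{-1}}D_\xi f=f$ to get the reverse inequality, whereas you read off equality directly from the fact that $B\mapsto \xi B$ permutes the family of balls, which is an equivalent and slightly more direct use of the invertibility of the dilation.
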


\begin{proof}
Let $B=B^\ell(a)\subset\mathbb{Q}_p$. Then
\[
|B|^{\frac{1}{q}-\frac{1}{r}}
\left(
\int_B
\left|\mathscr{H}_p^{\beta}(|D_{\xi}f|)(x)\right|^r dx
\right)^{\frac{1}{r}} .
\]

Using \eqref{*}, we obtain
\[
=
|\xi|_p^{-\beta}
|B|^{\frac{1}{q}-\frac{1}{r}}
\left(
\int_B
\left|\mathscr{H}_p^{\beta}(|f|)(\xi x)\right|^r dx
\right)^{\frac{1}{r}} .
\]

Substituting $z=\xi x$ and using the scaling property of the Haar
measure, $dx=|\xi|_p^{-1}dz$, we obtain
\[
=
|\xi|_p^{-\beta}
|B|^{\frac{1}{q}-\frac{1}{r}}
\left(
\int_{\xi B}
\left|\mathscr{H}_p^{\beta}(|f|)(z)\right|^r
|\xi|_p^{-1}dz
\right)^{\frac{1}{r}} .
\]

Thus,
\[
=
|\xi|_p^{-\beta-\frac{1}{r}}
|B|^{\frac{1}{q}-\frac{1}{r}}
\left(
\int_{\xi B}
\left|\mathscr{H}_p^{\beta}(|f|)(z)\right|^r dz
\right)^{\frac{1}{r}} .
\]

Since $|\xi B|=|\xi|_p|B|$, we have
\[
|B|^{\frac{1}{q}-\frac{1}{r}}
=
|\xi|_p^{-\frac{1}{q}+\frac{1}{r}}
|\xi B|^{\frac{1}{q}-\frac{1}{r}}.
\]

Hence,
\[
=
|\xi|_p^{-\beta-\frac{1}{q}}
|\xi B|^{\frac{1}{q}-\frac{1}{r}}
\left(
\int_{\xi B}
\left|\mathscr{H}_p^{\beta}(|f|)(z)\right|^r dz
\right)^{\frac{1}{r}} .
\]

By the definition of the norm in \eqref{FCM:norm1},
\[
\leq
|\xi|_p^{-\beta-\frac{1}{q}}
\|f\|_{C^{\beta}M_{r,q}(\mathbb{Q}_p)}.
\]

Taking the supremum over all balls $B\subset\mathbb{Q}_p$, we obtain
\begin{align}\label{5}
\|D_{\xi}f\|_{C^{\beta}M_{r,q}(\mathbb{Q}_p)}
\leq
|\xi|_p^{-\beta-\frac{1}{q}}
\|f\|_{C^{\beta}M_{r,q}(\mathbb{Q}_p)}.
\end{align}

Since $D_{\xi^{-1}}D_{\xi}f=f$, applying \eqref{5} with
$\xi^{-1}$ in place of $\xi$ gives
\begin{align}\label{6}
\|f\|_{C^{\beta}M_{r,q}(\mathbb{Q}_p)}
&=
\|D_{\xi^{-1}}D_{\xi}f\|_{C^{\beta}M_{r,q}(\mathbb{Q}_p)}\\
&\leq
|\xi|_p^{\beta+\frac{1}{q}}
\|D_{\xi}f\|_{C^{\beta}M_{r,q}(\mathbb{Q}_p)}.
\end{align}

Therefore,
\[
\|D_{\xi}f\|_{C^{\beta}M_{r,q}(\mathbb{Q}_p)}
\geq
|\xi|_p^{-\beta-\frac{1}{q}}
\|f\|_{C^{\beta}M_{r,q}(\mathbb{Q}_p)}.
\]

Combining this with \eqref{5}, we conclude that
\[
\|D_{\xi}f\|_{C^{\beta}M_{r,q}(\mathbb{Q}_p)}
=
|\xi|_p^{-\beta-\frac{1}{q}}
\|f\|_{C^{\beta}M_{r,q}(\mathbb{Q}_p)}.
\]
\end{proof}

%\hfill $\square$

\begin{lemma}\label{lem:minkowski}
Let $1\le r\le q<\infty$ and $0\le\beta<1$. For any Haar measurable
function
$F:\mathbb{Q}_p\times\mathbb{Q}_p\to\mathbb{C}$ such that
\[
\int_{\mathbb{Q}_p}
\|F(y,\cdot)\|_{C^\beta M_{r,q}(\mathbb{Q}_p)}\,dy<\infty,
\]
we have
\[
\left\|
\int_{\mathbb{Q}_p}F(y,\cdot)\,dy
\right\|_{C^\beta M_{r,q}(\mathbb{Q}_p)}
\le
\int_{\mathbb{Q}_p}
\|F(y,\cdot)\|_{C^\beta M_{r,q}(\mathbb{Q}_p)}\,dy.
\]
\end{lemma}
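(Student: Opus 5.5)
The plan is to reduce the statement to two applications of the classical Minkowski integral inequality, one inside the fractional Hardy operator and one inside the $L^r$-norm over a fixed ball, and then to take the supremum over balls at the very end. Set $G(x)=\int_{\mathbb{Q}_p}F(y,x)\,dy$. First, pointwise for $x\in\mathbb{Q}_p^\ast$, we have $|G(t)|\le\int_{\mathbb{Q}_p}|F(y,t)|\,dy$. Tonelli's theorem, applied to the nonnegative measurable function $|F|$ on $\mathbb{Q}_p\times\mathbb{Q}_p$ with the product Haar measure, then gives
\[
\mathscr{H}_p^{\beta}(|G|)(x)\le \frac{1}{|x|_p^{1-\beta}}\int_{|t|_p\le|x|_p}\int_{\mathbb{Q}_p}|F(y,t)|\,dy\,dt
=\int_{\mathbb{Q}_p}\mathscr{H}_p^{\beta}(|F(y,\cdot)|)(x)\,dy .
\]
This is the monotonicity and linearity of $\mathscr{H}_p^\beta$ on nonnegative functions, made rigorous by Tonelli.

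Next, fix an arbitrary ball $B=B^\ell(a)$. Taking $L^r(B)$-norms of the above pointwise bound, and using the continuous Minkowski integral inequality in $L^r(B)$ (valid since $r\ge1$, with $\sigma$-finite Haar measures on both factors), we obtain
\[
\Big(\int_B \big(\mathscr{H}_p^{\beta}(|G|)(x)\big)^r dx\Big)^{1/r}
\le \int_{\mathbb{Q}_p}\Big(\int_B \big(\mathscr{H}_p^{\beta}(|F(y,\cdot)|)(x)\big)^r dx\Big)^{1/r} dy .
\]
Multiplying by $|B|^{\frac1q-\frac1r}$, which does not depend on $y$, and bounding each inner quantity by its supremum over all balls yields
\[
|B|^{\frac1q-\frac1r}\Big(\int_B \big(\mathscr{H}_p^{\beta}(|G|)\big)^r dx\Big)^{1/r}\le \int_{\mathbb{Q}_p}\|F(y,\cdot)\|_{C^\beta M_{r,q}(\mathbb{Q}_p)}\,dy .
\]
Since the right-hand side is independent of $B$, taking the supremum over $a\in\mathbb{Q}_p$ and $\ell\in\mathbb{Z}$ gives the claim via \eqref{FCM:norm1}.

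The main obstacle is the measurability bookkeeping needed to justify Minkowski's integral inequality. One must check that $(y,x)\mapsto \mathscr{H}_p^\beta(|F(y,\cdot)|)(x)$ is jointly measurable. This holds by Tonelli, since it equals $|x|_p^{\beta-1}\int \chi_{\{|t|_p\le|x|_p\}}|F(y,t)|\,dt$, which is an integral of a jointly measurable nonnegative function in $(y,x,t)$. Because all quantities are nonnegative, the inequalities hold in $[0,\infty]$, and the hypothesis $\int\|F(y,\cdot)\|\,dy<\infty$ then shows that the right side is finite, so $G$ is well defined almost everywhere and lies in $C^\beta M_{r,q}(\mathbb{Q}_p)$. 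One point to note is that the order of taking the supremum and the $y$-integral only goes in the favorable direction, namely $\sup_B\int\le\int\sup_B$, which is exactly what is needed here.
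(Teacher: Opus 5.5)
Your proposal is correct and follows the paper's proof essentially step for step. Both arguments fix a ball $B$, move the absolute value inside, and swap the $t$- and $y$-integrals by Tonelli. They then apply Minkowski's integral inequality in $L^r(B)$, bound each $y$-slice by $\|F(y,\cdot)\|_{C^\beta M_{r,q}(\mathbb{Q}_p)}$, and take the supremum over balls. Your joint-measurability and well-definedness remarks are extra care that the paper omits.
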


\begin{proof}
    Let $B=B^\ell(a)\subset \mathbb{Q}_p$ be any ball, we have

\[
|B|^{\frac{1}{q}-\frac{1}{r}}
\left(
\int_B
\left(
\frac{1}{|x|_p^{1-\beta}}
\int_{|t|_p\le |x|_p}
\left|
\int_{\mathbb{Q}_p}F(y,t)\,dy
\right|
dt
\right)^r
dx
\right)^{\frac{1}{r}}
\]

\[
\le
|B|^{\frac{1}{q}-\frac{1}{r}}
\left(
\int_B
\left(
\frac{1}{|x|_p^{1-\beta}}
\int_{|t|_p\le |x|_p}
\int_{\mathbb{Q}_p}|F(y,t)|\,dy\,dt
\right)^r
dx
\right)^{\frac{1}{r}}
\]

\[
=
|B|^{\frac{1}{q}-\frac{1}{r}}
\left(
\int_B
\left(
\int_{\mathbb{Q}_p}
\frac{1}{|x|_p^{1-\beta}}
\int_{|t|_p\le |x|_p}
|F(y,t)|\,dt\,dy
\right)^r
dx
\right)^{\frac{1}{r}}
\qquad (\text{By Tonelli's theorem})
\]

\[
\le
|B|^{\frac{1}{q}-\frac{1}{r}}
\int_{\mathbb{Q}_p}
\left(
\int_B
\left(
\frac{1}{|x|_p^{1-\beta}}
\int_{|t|_p\le |x|_p}
|F(y,t)|\,dt
\right)^r
dx
\right)^{\frac{1}{r}}
dy
\]

here we use the Minkowski inequality for integrals \cite{BJ3} (see Theorem 1.42) in the last inequality.

Now, according to Definition \eqref{def:FCM}, we have

\[
|B|^{\frac{1}{q}-\frac{1}{r}}
\left(
\int_B
\left(
\frac{1}{|x|_p^{1-\beta}}
\int_{|t|_p\le |x|_p}
\left|
\int_{\mathbb{Q}_p}F(y,t)\,dy
\right|
dt
\right)^r
dx
\right)^{\frac{1}{r}}
\]

\[
\le
\int_{\mathbb{Q}_p}
|B|^{\frac{1}{q}-\frac{1}{r}}
\left(
\int_B
\left(
\frac{1}{|x|_p^{1-\beta}}
\int_{|t|_p\le |x|_p}
|F(y,t)|\,dt
\right)^r
dx
\right)^{\frac{1}{r}}
dy
\]

\[
\le
\int_{\mathbb{Q}_p}
\|F(y,\cdot)\|_{C^{\beta}M_{r,q}(\mathbb{Q}_p)}\,dy
\qquad 
\]

By taking the supremum over all balls $B\subset\mathbb{Q}_p$ on both sides,
we obtain

\[
\left\|
\int_{\mathbb{Q}_p}F(y,\cdot)\,dy
\right\|_{C^{\beta}M_{r,q}(\mathbb{Q}_p)}
\le
\int_{\mathbb{Q}_p}
\|F(y,\cdot)\|_{C^{\beta}M_{r,q}(\mathbb{Q}_p)}\,dy .
\]

\end{proof}

%In the following lemmas, we collect several fundamental properties of the
%Ces\`aro function space $\mathrm{Ces}_r(\mathbb{Q}_p)$. In particular, we
%establish the boundedness of the dilation operator and prove a
%Minkowski-type inequality in this setting.

%%%%%%%%%%%%%%%%%%%%%%%%%%%%%%%%%%%%%%%%%%%%%%%%%%%%%%%%%%%%%%%%%%%%%%%%%%%%%%%%%%%%%%%%%%%%%%%%%%	
\section{$p$-adic Hardy--Hilbert-type integral operators on Ces\`aro function spaces}
We now turn to the study of integral operators induced by homogeneous kernel acting on $p$-adic Ces\`aro function spaces. The Hardy--Hilbert-type integral operator $\mathscr{T}^p,$ over the $p$-adic field, is defined as 
\begin{align}\label{tf:op}
\mathscr{T}^pf(x) = \int_{\mathbb{Q}_p^*} \mathscr{K}\bigl(|x|_p,|y|_p\bigr) f(y) \di y,\qquad x \in \mathbb{Q}_p^*.
\end{align}
where $\mathscr{K}:(0,\infty)\times(0,\infty)\to[0,\infty)$ is a measurable function satisfying
\begin{align} \label{Homoge}
    \mathscr{K}(\tau x,\tau y) = \tau^{-1}\mathscr{K}(x,y), \qquad \tau >0.
\end{align}

The operator $\mathscr{T}^p$ was first introduced by Li and Jin \cite{Li}, who established its boundedness on weighted Lebesgue spaces. Further developments have extended these boundedness results to several other function spaces, such as $p$-adic block spaces, two-weighted Morrey spaces, Morrey--Herz spaces, and weighted Triebel--Lizorkin spaces; see \cite{SH1,KH1,KH2}.

The following theorem establishes the boundedness result for $p$-adic Hardy--Hilbert-type integral operators on Ces\`aro function spaces.

\begin{theorem} \label{thm:main}
Let Let $1\le r\le q<\infty$ and $0\le \beta <1$. If the kernel $\mathscr{K}$ satisfies 
\begin{equation} \label{thm:main1}
C_{p,\beta,q}:=(1-p^{-1})\sum_{j\in\mathbb{Z}}\mathscr{K}(1,p^j)\,p^{j(1-\beta -1/q)}  < \infty.
\end{equation}
Then, for any $f\in \mathrm{C^{\beta}M_{r,q}(\mathbb{Q}_p)}$, we have
\begin{align*}
    \|\mathscr{T}^pf\|_{C^{\beta}M_{r,q}(\mathbb{Q}_p)}\leq C_{p,\beta,q}\|f\|_{C^{\beta}M_{r,q}(\mathbb{Q}_p)},
\end{align*}
where $\mathscr{T}^p$ is the operator in \eqref{tf:op}.
\end{theorem}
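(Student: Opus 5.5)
The plan is to use the dilation lemma together with the Minkowski-type inequality, Lemma~\ref{lem:minkowski}. The first step is to rewrite $\mathscr{T}^p f$ as an average of dilates of $f$. Decompose $\mathbb{Q}_p^*=\bigcup_{j\in\mathbb{Z}}S^j$. For fixed $x\in\mathbb{Q}_p^*$, substitute $y=xz$, so that $dy=|x|_p\,dz$. Homogeneity \eqref{Homoge} with $\tau=|x|_p$ then gives
\[
\mathscr{T}^pf(x)=\int_{\mathbb{Q}_p^*}\mathscr{K}\bigl(|x|_p,|x|_p|z|_p\bigr)f(xz)\,|x|_p\,dz=\int_{\mathbb{Q}_p^*}\mathscr{K}(1,|z|_p)\,(D_zf)(x)\,dz .
\]
So $\mathscr{T}^pf=\int_{\mathbb{Q}_p^*}F(z,\cdot)\,dz$ with $F(z,x)=\mathscr{K}(1,|z|_p)f(xz)$. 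This is a Haar measurable function on $\mathbb{Q}_p\times\mathbb{Q}_p$; we set it equal to $0$ at $z=0$, which is a null set.

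Next, apply Lemma~\ref{lem:minkowski} to $F$, using that $\mathscr{K}\ge 0$. This gives
\[
\|\mathscr{T}^pf\|_{C^\beta M_{r,q}}\le\int_{\mathbb{Q}_p^*}\mathscr{K}(1,|z|_p)\,\|D_zf\|_{C^\beta M_{r,q}}\,dz .
\]
Lemma~\ref{lem:dia} gives $\|D_zf\|_{C^\beta M_{r,q}}=|z|_p^{-\beta-1/q}\|f\|_{C^\beta M_{r,q}}$. We then split the remaining integral over the spheres $S^{k}$, on which $|z|_p=p^{k}$ and $|S^k|=p^{k}(1-p^{-1})$:
\[
\int_{\mathbb{Q}_p^*}\mathscr{K}(1,|z|_p)|z|_p^{-\beta-1/q}\,dz=(1-p^{-1})\sum_{k\in\mathbb{Z}}\mathscr{K}(1,p^k)\,p^{k(1-\beta-1/q)}=C_{p,\beta,q}.
\]
Here I am using the convention that $|z|_p=p^{k}$ on $S^k$, consistent with $|S^k|=p^k(1-p^{-1})$. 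Reindexing $k=j$ matches \eqref{thm:main1} exactly, and this completes the argument.

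The main obstacle is justifying Lemma~\ref{lem:minkowski}. Its hypothesis $\int\|F(z,\cdot)\|\,dz<\infty$ holds precisely because its value is $C_{p,\beta,q}\|f\|_{C^\beta M_{r,q}}<\infty$, so the lemma does apply. Measurability of $(z,x)\mapsto f(xz)$ also needs a check; it follows because multiplication is continuous on $\mathbb{Q}_p\times\mathbb{Q}_p$ and $f$ can be taken Borel.

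One more point must be settled before the proof is complete: the lemma bounds the norm of $\bigl|\int F\,dz\bigr|$ via $\mathscr{H}^\beta_p$ of the absolute value, and this is consistent because the $C^\beta M_{r,q}$ norm only sees $|\cdot|$. The change of variables is legitimate for each fixed $x\neq0$. Convergence of $\mathscr{T}^pf(x)$ for almost every $x$ follows a posteriori from the finiteness of the bound, by first applying the argument to $|f|$.
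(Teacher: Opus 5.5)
Your proposal is correct and follows the paper's proof essentially step for step. Both use the substitution $y=xz$ together with homogeneity to write $\mathscr{T}^pf=\int_{\mathbb{Q}_p^*}\mathscr{K}(1,|z|_p)\,D_zf\,dz$, then apply Lemma~\ref{lem:minkowski} and Lemma~\ref{lem:dia}, and finally use the sphere decomposition to identify the resulting integral with $C_{p,\beta,q}$; your extra remarks on joint measurability of $(z,x)\mapsto f(xz)$ and on a.e.\ finiteness of $\mathscr{T}^pf$ (obtained by first running the argument on $|f|$) are justifications the paper leaves implicit.
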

\begin{proof} 
After the change of variable $y=\xi x$ and using $\di y=|x|_p\di \xi$
\begin{align*}
\mathscr{T}^pf(x) &= \int_{\mathbb{Q}_p^*} \mathscr{K}\bigl(|x|_p,|\xi x|_p\bigr) f(\xi x) |x|_p \di \xi \\
&= \int_{\mathbb{Q}_p^*} \mathscr{K}\bigl(|x|_p,|\xi|_p|x|_p\bigr) f(\xi x) |x|_p \di \xi \\
&= \int_{\mathbb{Q}_p^*}|x|^{-1}_p \mathscr{K}\bigl(1,|\xi|_p\bigr) f(\xi x) |x|_p \di \xi \\
&= \int_{\mathbb{Q}_p^*} \mathscr{K}\bigl(1,|\xi|_p\bigr) D_\xi f( x) \di \xi. \\
\end{align*}
Applying $\|\cdot\|_{C^{\beta}M_{r,q}(\mathbb{Q}_p)}$ and by using Lemma \ref{lem:dia} and Lemma \ref{lem:minkowski}, we get 
\begin{align} \nonumber
  \|\mathscr{T}^pf\|_{C^{\beta}M_{r,q}(\mathbb{Q}_p)} &\leq \int_{\mathbb{Q}_p^*} \mathscr{K}\bigl(1,|\xi|_p\bigr) \|D_\xi f( x)\|_{C^{\beta}M_{r,q}(\mathbb{Q}_p)} \di \xi \\ \nonumber
   &= \int_{\mathbb{Q}_p^*} \mathscr{K}\bigl(1,|\xi|_p\bigr) |\xi|^{-\beta-1/q}_p\|f\|_{C^{\beta}M_{r,q}(\mathbb{Q}_p)} \di \xi \\ \label{main:eq1}
   &= \|f\|_{C^{\beta}M_{r,q}(\mathbb{Q}_p)}\int_{\mathbb{Q}_p^*} \mathscr{K}\bigl(1,|\xi|_p\bigr) |\xi|^{-\beta-1/q}_p \di \xi. 
\end{align}
Since $\mathbb{Q}_p^*$ can be written as a disjoint union $\bigcup_{j=-\infty}^{\infty} S^j$ where $S^j=\{x\in\mathbb{Q}_p : |x|_p = p^j\}$ and using the fact that $|S^j| = p^j(1-p^{-1}),$ we have 

\begin{align} \nonumber
\int_{\mathbb{Q}_p^*} \mathscr{K}\bigl(1,|\xi|_p\bigr) |\xi|^{-\beta-1/q}_p \di \xi &= \sum_{j\in\mathbb{Z}} \int_{S^j} \mathscr{K}\bigl(1,|\xi|_p\bigr) |\xi|^{-\beta-1/q}_p \di \xi\\ \nonumber
&= \sum_{j\in\mathbb{Z}} \mathscr{K}\bigl(1,p^j\bigr) p^{-j(\beta+1/q)} |S^j|\\ \label{main:eq2}
&=(1-p^{-1})\sum_{j\in\mathbb{Z}} \mathscr{K}(1,p^j)\,p^{j(1-\beta -1/q)}.
\end{align}
Therefore, by \eqref{main:eq1} and \eqref{main:eq2}, we get
\begin{align*}
    \|\mathscr{T}^pf\|_  \mathrm{C^{\beta}M_{r,q}(\mathbb{Q}_p)}\leq C_{p,\beta,q}\|f\|_  \mathrm{C^{\beta}M_{r,q}(\mathbb{Q}_p)},
\end{align*}
which completes the proof.
\end{proof}

\section{Some particular results}
In this section, we will establish the Hardy inequality, Hilbert inequality and Hardy--Littlewood--P\'{o}lya inequality on fractional Ces\`aro--Morrey space $C^{\beta}M_{r,q}(\mathbb{Q}_p)$, by choosing particular kernel in \eqref{tf:op}. 
%Theorem \eqref{thm:main} yields the Hardy’s inequalities and  Hilbert inequality on $\mathrm{Ces}_r(\mathbb{Q}_p)$ in the following lemmas. 

If we choose the kernel $\mathscr{K}(|x|_p,|y|_p)=|x|_p^{-1}\chi_{\{y\in\mathbb{Q}_p: |y|_p \leq |x|_p \}},$ then the operator $\mathscr{T}^p$ reduces to the $p$-adic Hardy operator defined in \eqref{op:Hardy}. Hence, the following result shows that the Hardy operator $\mathscr{H}^p$ is bounded on the fractional Ces\`aro--Morrey space $C^{\beta}M_{r,q}(\mathbb{Q}_p)$, and gives an explicit value of the constant.

\begin{theorem}
Let Let $1\le r\le q<\infty$ and $0\le \beta <1-\frac{1}{q}$. Then, for any 
$f \in C^{\beta}M_{r,q}(\mathbb{Q}_p)$, we have
\begin{align*}
\|\mathscr{H}^p f\|_{C^{\beta}M_{r,q}(\mathbb{Q}_p)} 
\le C_{p,\beta,q} \, \|f\|_{C^{\beta}M_{r,q}(\mathbb{Q}_p)},
\end{align*}
where $C_{p,\beta,q}$ is given by
\begin{align*}
C_{p,\beta,q}
=
(1-p^{-1}) \sum_{j\in\mathbb{Z}} \mathscr{K}(1,p^j)\, p^{j\left(1-\beta-\frac{1}{q}\right)}.
\end{align*}

%\item
%%\[
%\|\mathscr{H}^{*,p} f\|_{\mathrm{Ces}_r(\mathbb{Q}_p)}
%\le
%(1-p^{-1})
%\frac{p^{-1/r}}{1-p^{-1/r}},
%\|f\|_{\mathrm{Ces}_r(\mathbb{Q}_p)}.
%\]
%\end{enumerate}
\end{theorem}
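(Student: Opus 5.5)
The plan is to obtain this as a direct corollary of Theorem~\ref{thm:main}, applied with the Hardy kernel $\mathscr{K}(s,t)=s^{-1}\chi_{\{t\le s\}}$ for $s,t>0$.

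First I will check that this kernel fits the framework of \eqref{tf:op}. It is nonnegative and measurable on $(0,\infty)^2$. It is also homogeneous of degree $-1$, since
\[
\mathscr{K}(\tau s,\tau t)=(\tau s)^{-1}\chi_{\{\tau t\le \tau s\}}=\tau^{-1}\mathscr{K}(s,t),
\]
so \eqref{Homoge} holds. Substituting it into \eqref{tf:op} gives
\[
\mathscr{T}^pf(x)=|x|_p^{-1}\int_{0<|y|_p\le|x|_p}f(y)\,dy .
\]
This is exactly $\mathscr{H}^pf(x)$ from \eqref{op:Hardy}, because the point $\{0\}$ has Haar measure zero.

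Next I will verify the summability condition \eqref{thm:main1}. We have $\mathscr{K}(1,p^j)=\chi_{\{p^j\le 1\}}$, which equals $1$ for $j\le 0$ and $0$ for $j>0$. Setting $\gamma=1-\beta-\frac1q$, the sum becomes
\[
\sum_{j\le 0}p^{j\gamma}=\sum_{k\ge0}p^{-k\gamma}.
\]
This geometric series converges exactly when $\gamma>0$, that is, when $\beta<1-\frac1q$, which is the hypothesis of the theorem. Its value is $(1-p^{-\gamma})^{-1}$, so
\[
C_{p,\beta,q}=\frac{1-p^{-1}}{1-p^{\beta+\frac1q-1}}<\infty .
\]

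Finally, Theorem~\ref{thm:main} then gives the stated inequality with this constant, which agrees with the expression in the statement. I do not expect any real obstacle. The only point needing care is the convergence of the series at $j\to-\infty$: this is where the strict inequality $\beta<1-\frac1q$ is used, and it explains why the admissible range of $\beta$ is narrower than in Theorem~\ref{thm:main}. The terms with $j>0$ vanish, so there is no convergence issue on that side.
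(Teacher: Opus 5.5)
Your proposal is correct and follows the paper's proof essentially verbatim. You realize $\mathscr{H}^p$ as $\mathscr{T}^p$ with kernel $\mathscr{K}(s,t)=s^{-1}\chi_{\{t\le s\}}$, check \eqref{Homoge}, sum the geometric series over $j\le 0$ to get $C_{p,\beta,q}=(1-p^{-1})/(1-p^{-(1-\beta-1/q)})$ using $\beta<1-\frac1q$, and invoke Theorem~\ref{thm:main}; your remark that the origin has Haar measure zero is a small detail the paper leaves implicit.
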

\begin{proof}
Let $\mathscr{K}(|x|_p,|y|_p)=|x|_p^{-1}\chi_{\{\,y\in\mathbb{Q}_p:\,|y|_p\le |x|_p\,\}}.$ Clearly, kernel satisfies the homogeneity condition \eqref{Homoge} and
\begin{align*}
    C_{p,\beta, q}&=(1-p^{-1}) \sum_{j\in\mathbb{Z}} \mathscr{K}(1,p^j)\, p^{j\left(1-\beta-\frac{1}{q}\right)} \\
    &=(1-p^{-1})\sum_{j\leq 0} p^{j\left(1-\beta-\frac{1}{q}\right)} \\
   & =(1-p^{-1})\sum_{j=0}^{\infty} p^{-j\left(1-\beta-\frac{1}{q}\right)} \\
   &= \frac{1-p^{-1}}{1 - p^{-\left(1-\beta-\frac{1}{q}\right)}}, \qquad \because \quad  0\le \beta <1-\frac{1}{q}.
\end{align*}
Therefore, result follows from Theorem \ref{thm:main}. %\textbf{add remark here to connect with previous work}
%Similarly taking
%\[
%\mathscr{K}(|s|_p,|t|_p)
%=
%|t|_p^{-1}\,
%\chi_{\{\,t\in\mathbb{Q}_p:\,|t|_p\ge |s|_p\,\}},
%\]
%in
%\[
%\mathscr{T}^pf(s)
%=
%\int_{\mathbb{Q}_p}
%\mathscr{K}(|s|_p,|t|_p)\,f(t)\,\di (t),
%\]
%we obtain
%\[
%\mathscr{H}^{*,p} f(s)
%=
%\int_{\mathbb{Q}_p}
%\mathscr{K}(|s|_p,|t|_p)\,f(t)\,\di (t).
%\]

%It satisfies the assumption  \eqref{thm:main1} and
%\[
%C_{p,r}
%=
%(1-p^{-1})
%\sum_{j\in\mathbb{Z}}
%\mathscr{K}(p^j,1)\,
%p^{\,j\left(1-\frac{1}{r}\right)}
%=
%(1-p^{-1})
%\sum_{j=1}^{\infty}
%p^{-j/r}.
%\]

%This is a convergent geometric series since $p^{-1/r}<1$ and
%\[
%C_{p,r}
%=
%(1-p^{-1})
%\frac{p^{-1/r}}{1-p^{-1/r}}
%<\infty,
%\quad r>1.
%\]
\end{proof}
If we choose
\begin{align}\label{Hilbert}
    \mathscr{K}(s,t)=\frac{1}{|s|_p+|t|_p}
\end{align}

in \eqref{tf:op}, then the operator $\mathscr{T}^p$ reduces to the $p$-adic Hilbert
operator. More precisely, for a measurable function $f$ defined on $\mathbb{Q}_p^\ast$, we
have
\[
\mathscr{H}f(t)
=
\int_{\mathbb{Q}_p^\ast}
\frac{f(s)}{|s|_p+|t|_p}\,\di (s),
\qquad t\in\mathbb{Q}_p^\ast.
\]
In the next theorem, we establish a Hilbert-type inequality for this
operator on the fractional Ces\`aro function spaces
$\mathrm{C^{\beta}M_{r,q}(\mathbb{Q}_p)}$.

\medskip

\begin{theorem}
    Let Let $1\le r\le q<\infty$ and $0\le \beta<1-\frac{1}{q}$. For any $f\in \mathrm{C^{\beta}M_{r,q}(\mathbb{Q}_p)}$, we have
\[
\|\mathscr{H}f\|_{C^{\beta}M_{r,q}(\mathbb{Q}_p)}
\le C_{p,\beta,q} \|f\|_{C^{\beta}M_{r,q}(\mathbb{Q}_p)}.
\]
\end{theorem}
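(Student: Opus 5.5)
The plan is to deduce the statement directly from Theorem \ref{thm:main}, applied to the Hilbert kernel \eqref{Hilbert}. Viewed as a function on $(0,\infty)\times(0,\infty)$, this kernel is $\mathscr{K}(s,t)=(s+t)^{-1}$. It is nonnegative and measurable, and it satisfies the homogeneity condition \eqref{Homoge}, because $\mathscr{K}(\tau s,\tau t)=(\tau(s+t))^{-1}=\tau^{-1}\mathscr{K}(s,t)$. With this kernel, $\mathscr{T}^p$ in \eqref{tf:op} is exactly the operator $\mathscr{H}$. It therefore suffices to verify the summability condition \eqref{thm:main1}, that is, to show that
\[
C_{p,\beta,q}=(1-p^{-1})\sum_{j\in\mathbb{Z}}\frac{p^{j(1-\beta-1/q)}}{1+p^{j}}<\infty .
\]

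To check finiteness, write $\gamma=1-\beta-1/q$ and split the sum at $j=0$.
\begin{itemize}
\item For $j\le 0$ we have $1+p^j\ge 1$, so each term is at most $p^{j\gamma}=p^{-|j|\gamma}$. This is a convergent geometric series exactly when $\gamma>0$, which is the hypothesis $\beta<1-\frac1q$.
\item For $j\ge 1$ we have $1+p^j\ge p^j$, so each term is at most $p^{j(\gamma-1)}=p^{-j(\beta+1/q)}$. This is geometric with ratio $p^{-(\beta+1/q)}<1$, since $q<\infty$ gives $1/q>0$.
\end{itemize}
Combining the two parts gives the explicit bound
\[
C_{p,\beta,q}\le (1-p^{-1})\Bigl(\frac{1}{1-p^{-\gamma}}+\frac{p^{-(\beta+1/q)}}{1-p^{-(\beta+1/q)}}\Bigr)<\infty .
\]

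The conclusion then follows immediately from Theorem \ref{thm:main}. The only delicate point is confirming that both tails converge, and this is precisely where the restriction $\beta<1-\frac1q$ (for the tail at $-\infty$) and the finiteness of $q$ (for the tail at $+\infty$) are used. No other obstacle is expected.
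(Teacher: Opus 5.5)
Your proposal is correct and matches the paper's proof. Both check that $(s+t)^{-1}$ satisfies the homogeneity condition, split the series for $C_{p,\beta,q}$ at $j=0$, dominate the two halves by the geometric series $\sum_{j\ge 0}p^{-j(1-\beta-1/q)}$ and $\sum_{j\ge 1}p^{-j(\beta+1/q)}$, and then invoke Theorem~\ref{thm:main}. The only cosmetic difference is that for $j\le 0$ you bound $1/(1+p^{j})\le 1$ directly, while the paper first reindexes $j\mapsto -j$ and uses $1+p^{j}\ge p^{j}$; this gives the same bound.
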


\medskip

\begin{proof}
Observe that the kernel \eqref{Hilbert} is homogeneous of degree -1 and 
\begin{align*}
C_{p,\beta,q}
&=
(1-p^{-1})
\sum_{j\in\mathbb{Z}}
\mathscr{K}(1,p^j)\,p^{j\left(1-\beta-1/q\right)}
\\
&=
(1-p^{-1})
\sum_{j\in\mathbb{Z}}
\frac{p^{j(1-\beta -\frac{1}{q})}}{1+p^j}.
\end{align*}

According to \eqref{thm:main}, It is enough to show that $C_{p,\beta,q}<\infty$. Now, we can write the sum as

\begin{align*}
C_{p,\beta,q}
&= (1-p^{-1})\left(\sum_{j>0} \frac{p^{j(1-\beta -\frac{1}{q})}}{1+p^{j}}+
\sum_{j\le 0} \frac{p^{j(1-\beta-\frac{1}{q})}}{1+p^{j}}\right) \\
&= (1-p^{-1})\left(\sum_{j=1}^{\infty} \frac{p^{j(1-\beta-\frac{1}{q})}}{1+p^{j}}+\sum_{j=-\infty}^{0} \frac{p^{j(1-\beta-\frac{1}{q})}}{1+p^{j}}
\right) \\
&= (1-p^{-1})\left(\sum_{j=1}^{\infty} \frac{p^{j(1-\beta-\frac{1}{q})}}{1+p^{j}}+\sum_{j=0}^{\infty} \frac{p^{-j(1-\beta-\frac{1}{q})}}{1+p^{-j}}
\right) \\
&= (1-p^{-1})\left(\sum_{j=1}^{\infty} \frac{p^{j(1-\beta-\frac{1}{q})}}{1+p^{j}}+\sum_{j=0}^{\infty} \frac{p^{j(\beta+1/q)}}{1+p^j}
\right) \\
&\le (1-p^{-1})\left(\sum_{j=1}^{\infty} \frac{p^{j(1-\beta-\frac{1}{q})}}{p^{j}}+\sum_{j=0}^{\infty} \frac{p^{j(\beta+\frac{1}{q})}}{p^j}
\right) \\
& \le (1-p^{-1})\left(\sum_{j=1}^{\infty} p^{-j(\beta+\frac{1}{q})}+\sum_{j=0}^{\infty} p^{-j(1-\beta-\frac{1}{q})}\right).
\end{align*}

Both series geometric converge because  $q>1$ and $0\le \beta<1-\frac{1}{q}$, and hence  $C_{p,\beta,q}<\infty$. Hence, using Theorem \eqref{thm:main}, we obtain the Hilbert inequality  on $\mathrm{C^{\beta}M_{r,q}(\mathbb{Q}_p)}$.
\end{proof}

 Consider the kernel in \eqref{tf:op}
\[
\mathscr{K}(|s|_p,|t|_p)
=
\frac{(|s|_p|t|_p)^{\lambda/2}}
{\max\{|s|_p,|t|_p\}^{\lambda+1}},
\qquad \lambda \ge 0.
\]
The corresponding integral operator is given by
\begin{equation}\label{ch4_polya}
\mathscr{D}^p_\lambda f(s)
=
\int_{\mathbb{Q}_p^\ast}  
\frac{(|s|_p|t|_p)^{\lambda/2}}
{\max\{|s|_p,|t|_p\}^{\lambda+1}}
\,f(t)\,\di (t),
\qquad s\in\mathbb{Q}_p^\ast .
\end{equation}
In the special case $\lambda=0$, the operator $\mathscr{D}^p_\lambda$
coincides with the $p$-adic Hardy--Littlewood--P\'{o}lya operator
$\mathscr{P}^p$, which is defined by
\[
\mathscr{P}^p f(s)
=
\int_{\mathbb{Q}_p^\ast}
\frac{f(t)}{\max\{|s|_p,|t|_p\}}
\,\di (t),
\qquad s\in\mathbb{Q}_p^\ast .
\]

 Finally, as an application of the main boundedness theorem, we study the
operator $\mathscr{D}^p_\lambda$ and establish its boundedness on
$\mathrm{C^{\beta}M_{r,q}(\mathbb{Q}_p)}$.

\begin{theorem}
Suppose Let $1\le r\le q<\infty$ and $\lambda/2+1>\beta+1/q$. Then the operator
$\mathscr{D}^p_\lambda$, defined by \eqref{ch4_polya}, is bounded on
$\mathrm{C^{\beta}M_{r,q}(\mathbb{Q}_p)}$.
\end{theorem}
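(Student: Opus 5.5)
The plan is to apply Theorem \ref{thm:main} directly with the kernel
\[
\mathscr{K}(s,t)=\frac{(st)^{\lambda/2}}{\max\{s,t\}^{\lambda+1}},\qquad s,t>0 .
\]
First I check the homogeneity condition \eqref{Homoge}. For $\tau>0$ we have $(\tau s\,\tau t)^{\lambda/2}=\tau^{\lambda}(st)^{\lambda/2}$ and $\max\{\tau s,\tau t\}^{\lambda+1}=\tau^{\lambda+1}\max\{s,t\}^{\lambda+1}$. Hence $\mathscr{K}(\tau s,\tau t)=\tau^{-1}\mathscr{K}(s,t)$. The kernel is also clearly nonnegative and measurable. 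It then remains only to show that the constant $C_{p,\beta,q}$ in \eqref{thm:main1} is finite; Theorem \ref{thm:main} then gives $\|\mathscr{D}^p_\lambda f\|_{C^{\beta}M_{r,q}(\mathbb{Q}_p)}\le C_{p,\beta,q}\|f\|_{C^{\beta}M_{r,q}(\mathbb{Q}_p)}$.

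To evaluate the constant, note that $\mathscr{K}(1,p^j)=p^{j\lambda/2}\max\{1,p^j\}^{-(\lambda+1)}$. I split the sum over $j\in\mathbb{Z}$ at $j=0$.

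For $j\le 0$ we have $\max\{1,p^j\}=1$, so the general term equals $p^{j(\lambda/2+1-\beta-1/q)}$. Writing $j=-k$ with $k\ge 0$, this part is the geometric series
\[
\sum_{k\ge0}p^{-k(\lambda/2+1-\beta-1/q)}.
\]
It converges precisely because of the hypothesis $\lambda/2+1>\beta+1/q$.

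For $j\ge1$ we have $\max\{1,p^j\}=p^j$, so the general term equals
\[
p^{j\lambda/2-j(\lambda+1)+j(1-\beta-1/q)}=p^{-j(\lambda/2+\beta+1/q)}.
\]
Since $\lambda\ge0$, $\beta\ge0$ and $q<\infty$, the exponent $\lambda/2+\beta+1/q$ is strictly positive, so this part also converges.

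Summing the two geometric series gives the explicit value
\[
C_{p,\beta,q}=(1-p^{-1})\left(\frac{1}{1-p^{-(\lambda/2+1-\beta-1/q)}}+\frac{p^{-(\lambda/2+\beta+1/q)}}{1-p^{-(\lambda/2+\beta+1/q)}}\right)<\infty .
\]

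There is no real obstacle here. The only point needing care is the $j\le0$ tail, which is exactly where the hypothesis $\lambda/2+1>\beta+1/q$ is used. The $j\ge1$ tail converges automatically because $1/q>0$. As a consequence, the case $\lambda=0$ (the Hardy--Littlewood--P\'olya operator $\mathscr{P}^p$) is bounded whenever $\beta<1-1/q$.
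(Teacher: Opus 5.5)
Your proposal is correct and follows essentially the same route as the paper: you verify the degree $-1$ homogeneity of the kernel, split the series defining $C_{p,\beta,q}$ at $j=0$ into two geometric series, and invoke Theorem~\ref{thm:main}. The $j\le 0$ part converges exactly by $\lambda/2+1>\beta+1/q$, the $j\ge 1$ part converges automatically since $\lambda/2+\beta+1/q>0$, and your explicit closed form for $C_{p,\beta,q}$ is a small addition the paper does not record.
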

\begin{proof}
 As in the proof of Theorem \ref{thm:main}, it is sufficient to verify the finiteness of the constant $C_{p,\beta,q}$.

For $a\in\mathbb{Q}_p^*$, we have
\[
\mathscr{K}(|as|_p,|at|_p)
=
\frac{(|a|_p|s|_p\,|a|_p|t|_p)^{\lambda/2}}
{\max\{|a|_p|s|_p,|a|_p|t|_p\}^{\lambda+1}}
=
|a|_p^{-1}\mathscr{K}(|s|_p,|t|_p),
\]
which shows that the  kernel \eqref{ch4_polya}  satisfies the required homogeneity condition \eqref{Homoge}.

Next, we compute
\begin{align*}
C_{p,\beta,q}=(1-p^{-1})\sum_{j\in\mathbb{Z}} \mathscr{K}(1,p^j)\,p^{j\left(1-\beta-1/q\right)}.
\end{align*}
A direct computation yields
\begin{align*}
\mathscr{K}(1,p^j)=\frac{p^{j\lambda/2}}{\max\{p^j,1\}^{\lambda+1}}=
\begin{cases}
p^{-j(\lambda/2+1)}, & j\ge0,\\[4pt]
p^{j\lambda/2}, & j<0.
\end{cases}
\end{align*}
Therefore,
\begin{align*}
C_{p,\beta,q}&=(1-p^{-1})\left(\sum_{j \ge 0}\mathscr{K}(1,p^j)\,p^{j(1-\beta-\frac{1}{q})}+\sum_{j<0}\mathscr{K}(1,p^j)\,p^{j(1-\beta-\frac{1}{q})}\right)\\
&=(1-p^{-1})\left(\sum_{j \ge 0}p^{-j(\lambda/2+1)}\,p^{j(1-\beta-\frac{1}{q})}+\sum_{j<0}p^{j\lambda/2}\,p^{j(1-\beta-\frac{1}{q})}\right)\\
&=(1-p^{-1})\left(\sum_{j\ge0}p^{-j(\lambda/2+\beta+\frac{1}{q})}+\sum_{j<0}
p^{j(\lambda/2+1-\beta-\frac{1}{q})}
\right).
\end{align*}
Since the first summation is a geometric series, it converges immediately. For the second summation, the convergence is ensured by the condition $\lambda/2 +1 >\beta+ 1/q$, which guarantees that the corresponding exponent is strictly negative. Hence, the series is summable, and consequently we obtain $C_{p,\beta,q}<\infty$. The desired boundedness then follows directly from Theorem ~\eqref{thm:main}.

\end{proof}
\begin{remark}
    Setting $\lambda=0$ in the above theorem, the operator $\mathscr{D}^p_\lambda$ reduces to the $p$-adic Hardy--Littlewood--P\'{o}lya operator $\mathscr{P}^p$, and consequently $\mathscr{P}^p$ is bounded on the fractional Ces\`aro--Morrey space $C^{\beta}M_{r,q}(\mathbb{Q}_p)$.
\end{remark}

%%%%%%%%%%%%%%%%%%%%%%%%%%%%%%%%%%%%%%%%%%%%%%%%%%%%%%%%%%%%%%%%%%%%%%%%%%%%%%
\section{$p$-adic Fractional integrals}

In this section, we introduce the $p$-adic analogues of the Erd\'elyi--Kober fractional integral operators of the first and second kinds. As an application of Theorem~\ref{thm:main}, we establish the boundedness of these operators on the fractional Ces\`aro--Morrey space over $\mathbb{Q}_p$. For the classical definitions and standard notation of Erd\'elyi--Kober fractional integral operators, we refer the reader to \cite{HO2}.

Let $\nu>0$, $\delta>0$, and $\gamma\in \mathbb{R}$. For a
measurable function $f$ on $\mathbb{Q}_p$, the first and second kind $p$-adic Erd\'elyi--Kober fractional integrals, respectively, are defined as follows:
\begin{align*}
I_{\gamma,\delta}^{\nu}f(s)=\int_{\mathbb{Q}_p^*}I(|s|_p,|t|_p) f(t) \di t, \qquad  s \in \mathbb{Q}_p^*, 
\end{align*}
and
\begin{align*}
J_{\gamma,\delta}^{\nu}f(s)=\int_{\mathbb{Q}_p^*}J(|s|_p,|t|_p) f(t) \di t, \qquad s\in\mathbb{Q}_p^*,
\end{align*}
where the kernels $I(|s|_p,|t|_p)$ and $J(|s|_p,|t|_p)$ are given by
\begin{align}
I(|s|_p,|t|_p)=\frac{|s|_p^{-\nu(\gamma+\delta)}}{\Gamma(\delta)}\chi_{\{t \in \mathbb{Q}_p : |t|_p < |s|_p\}}(t)
|t|_p^{\nu(\gamma+1)-1}
\left(|s|_p^\nu-|t|_p^\nu\right)^{\delta-1},
\end{align}
and
\begin{align}
J(|s|_p,|t|_p)=\frac{|s|_p^{\nu\gamma}}{\Gamma(\delta)}
\,\chi_{\{t \in \mathbb{Q}_p : |t|_p > |s|_p\}}(t)|t|_p^{-\nu(\gamma+\delta)+\nu-1}
\left(|t|_p^\nu-|s|_p^\nu\right)^{\delta-1}.   
\end{align}
The Erd\'elyi--Kober fractional integrals form an important class of operators in fractional calculus and arise naturally in several problems related to mathematical physics. For further details concerning their properties and applications, we refer the reader to \cite{KVS,SI}. In the following theorem, we establish the boundedness of the $p$-adic Erd\'elyi--Kober fractional integral operator on the fractional Ces\`aro--Morrey space over $\mathbb{Q}_p$.

\medskip

\begin{theorem} Let Let $1\le r\le q<\infty$ and $ \delta,\nu \in \mathbb{R}_{>0}, ~  \gamma \in \mathbb{R},~ \beta\in [0,1)$.

\begin{itemize}
\item[(1)]
If  $\nu(\gamma+1)>\beta+\frac{1}{q}$ , then there exists a constant $C>0$ such that
for any $f\in \mathrm{C^{\beta}M_{r,q}(\mathbb{Q}_p)}$,
\[
\|I_{\gamma,\delta}^{\nu} f\|_\mathrm{C^{\beta}M_{r,q}(\mathbb{Q}_p)}
\le
C\,\|f\|_\mathrm{C^{\beta}M_{r,q}(\mathbb{Q}_p)}.
\]

\item[(2)]
If $\nu\gamma +\beta >-\frac{1}{q}$, then there exists a constant $C>0$ such that
for any  $f\in \mathrm{C^{\beta}M_{r,q}(\mathbb{Q}_p)}$,
\[
\|J_{\gamma,\delta}^{\nu}  f\|_\mathrm{C^{\beta}M_{r,q}(\mathbb{Q}_p)}
\le
C\,\|f\|_\mathrm{C^{\beta}M_{r,q}(\mathbb{Q}_p)}.
\]
\end{itemize}
\end{theorem}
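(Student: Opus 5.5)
We obtain both parts from Theorem~\ref{thm:main}, applied to the kernels $I$ and $J$ viewed as functions on $(0,\infty)\times(0,\infty)$ via the same formulas. For each kernel there are two things to check: the homogeneity condition \eqref{Homoge}, and finiteness of the constant in \eqref{thm:main1}. The constant $C$ in the statement will then be the corresponding $C_{p,\beta,q}$.

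\emph{Homogeneity.} For $\tau>0$, the characteristic functions $\chi_{\{t<s\}}$ and $\chi_{\{t>s\}}$ are unchanged under $(s,t)\mapsto(\tau s,\tau t)$, so only the powers of $\tau$ need to be collected. For $I$,
\[
I(\tau s,\tau t)=\tau^{-\nu(\gamma+\delta)}\,\tau^{\nu(\gamma+1)-1}\,\tau^{\nu(\delta-1)}\,I(s,t)=\tau^{-1}I(s,t).
\]
For $J$,
\[
J(\tau s,\tau t)=\tau^{\nu\gamma}\,\tau^{-\nu(\gamma+\delta)+\nu-1}\,\tau^{\nu(\delta-1)}\,J(s,t)=\tau^{-1}J(s,t).
\]
So both kernels are homogeneous of degree $-1$ and are nonnegative and measurable.

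\emph{The constant for $I$.} Here $I(1,p^j)\neq0$ only when $p^j<1$, that is $j\le -1$. For these $j$,
\[
I(1,p^j)\,p^{j(1-\beta-\frac1q)}=\frac{1}{\Gamma(\delta)}\,p^{j\left(\nu(\gamma+1)-\beta-\frac1q\right)}\bigl(1-p^{j\nu}\bigr)^{\delta-1}.
\]
The factor $(1-p^{j\nu})^{\delta-1}$ is the delicate point, because for $\delta<1$ it blows up in the Euclidean case as $t\to s$. In the $p$-adic setting, however, the discreteness of the values $|t|_p$ saves us. Since $j\le-1$, we have $1-p^{j\nu}\in[1-p^{-\nu},1)$, so the factor is bounded by $\max\{1,(1-p^{-\nu})^{\delta-1}\}$ for every $\delta>0$. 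Hence
\[
C_{p,\beta,q}\le \frac{(1-p^{-1})\max\{1,(1-p^{-\nu})^{\delta-1}\}}{\Gamma(\delta)}\sum_{j\ge1}p^{-j\left(\nu(\gamma+1)-\beta-\frac1q\right)}.
\]
This geometric series converges exactly under the hypothesis $\nu(\gamma+1)>\beta+\frac1q$.

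\emph{The constant for $J$.} Here $J(1,p^j)\neq0$ only for $j\ge1$. Factor $(p^{j\nu}-1)^{\delta-1}=p^{j\nu(\delta-1)}(1-p^{-j\nu})^{\delta-1}$. The powers of $p^j$ then combine to the exponent
\[
-\nu(\gamma+\delta)+\nu-1+\nu(\delta-1)+1-\beta-\tfrac1q=-\left(\nu\gamma+\beta+\tfrac1q\right).
\]
As before, the factor $(1-p^{-j\nu})^{\delta-1}$ is bounded by $\max\{1,(1-p^{-\nu})^{\delta-1}\}$ because $j\ge1$. Therefore $C_{p,\beta,q}$ is bounded by a constant multiple of $\sum_{j\ge1}p^{-j(\nu\gamma+\beta+1/q)}$, which is finite precisely when $\nu\gamma+\beta>-\frac1q$.

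Once both constants are shown to be finite, Theorem~\ref{thm:main} gives parts (1) and (2). The only step requiring care is the uniform bound on the factor $(1\mp p^{\pm j\nu})^{\delta-1}$ when $0<\delta<1$. It follows from the strict inequalities $|t|_p<|s|_p$ and $|t|_p>|s|_p$ in the supports, which force $j\le-1$ and $j\ge 1$ respectively.
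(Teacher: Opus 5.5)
Your proposal is correct and follows the paper's route. Like the paper, you verify the degree $-1$ homogeneity of $I$ and $J$, and then show that $C_{p,\beta,q}$ from Theorem~\ref{thm:main} is finite by reducing it to the geometric series $\sum_{j\ge1}p^{-j(\nu(\gamma+1)-\beta-1/q)}$ and $\sum_{j\ge1}p^{-j(\nu\gamma+\beta+1/q)}$. Your uniform bound $\max\{1,(1-p^{-\nu})^{\delta-1}\}$ on the factors $(1-p^{j\nu})^{\delta-1}$ and $(1-p^{-j\nu})^{\delta-1}$ is cleaner than the paper's treatment. The paper simply replaces these factors by $1$ ``for large $j$'' and writes an equality where only an asymptotic comparison holds.
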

\medskip

\noindent

\begin{proof} 
Let us sketch the proof, since 
\begin{align*}
I_{\gamma,\delta}^{\nu}f(s)=\int_{\mathbb{Q}_p^*}I(|s|_p,|t|_p) f(t) \di t, \qquad  s \in \mathbb{Q}_p^*, 
\end{align*}
where
\begin{align*}
I(|s|_p,|t|_p)=\frac{|s|_p^{-\nu(\gamma+\delta)}}{\Gamma(\delta)}\chi_{\{t \in \mathbb{Q}_p : |t|_p < |s|_p\}}(t)
|t|_p^{\nu(\gamma+1)-1}
\left(|s|_p^\nu-|t|_p^\nu\right)^{\delta-1}.
\end{align*}
For any $a \in \mathbb{Q}_p^*,$ we have
\begin{align*}
I(|as|_p,|at|_p) &=\frac{|as|_p^{-\nu(\gamma+\delta)}}{\Gamma(\delta)}\chi_{\{t \in \mathbb{Q}_p : |t|_p < |s|_p\}}(at)
|at|_p^{\nu(\gamma+1)-1}
\left(|as|_p^\nu-|at|_p^\nu\right)^{\delta-1} \\
&=|a|_p^{-1} I(|s|_p,|t|_p).
\end{align*}
Thus, the kernel $I(|s|_p,|t|_p)$ satisfies the homogeneity condition \eqref{Homoge}. Furthermore, by Theorem \ref{thm:main}, it suffices to verify that $C_{p,\beta,q}<\infty$ in order to establish the boundedness of $I_{\gamma,\delta}^{\nu} $ on $\mathrm{C^{\beta}M_{r,q}(\mathbb{Q}_p)}$. Hence 
\begin{align*}
C_{p,\beta,q}
&=(1-p^{-1})\sum_{j\in\mathbb{Z}} I(1,p^j)\,p^{j(1-\beta -1/q)} \\
&=\frac{(1-p^{-1})}{\Gamma(\delta)}
\sum_{j<0}
(p^j)^{\nu(\gamma+1)-1}
(1-p^{j\nu})^{\delta-1}
p^{j\left(1-\beta-\frac{1}{q}\right)}\\
&=\frac{(1-p^{-1})}{\Gamma(\delta)}
\sum_{j=-\infty}^{-1}
(p^j)^{\nu(\gamma+1)-\beta-\frac{1}{q}}
(1-p^{j\nu})^{\delta-1}\\
&=\frac{(1-p^{-1})}{\Gamma(\delta)}
\sum_{j=1}^{\infty}
p^{-j\left(\nu(\gamma+1)-\beta-\frac{1}{q}\right)}
(1-p^{-j\nu})^{\delta-1}.
\end{align*}

Observe that for large $j$, $1-p^{-j\nu}\approx 1$, and hence

\[
C_{p,\beta,q}
=
\frac{(1-p^{-1})}{\Gamma(\delta)}
\sum_{j=1}^{\infty}
p^{-j\left(\nu(\gamma+1)-\beta-\frac{1}{q}\right)}.
\]

Since $\nu(\gamma+1)>\beta+\frac{1}{q}$, it follows that $C_{p,\beta,q}$ is finite. Hence, by Theorem~\ref{thm:main}, the operator $I^{\nu}_{p,\delta}$ is bounded on $C^{\beta}M_{r,q}(\mathbb{Q}_p)$.
\medskip

Similarly, the kernel
\begin{align}
J(|s|_p,|t|_p)=\frac{|s|_p^{\nu\gamma}}{\Gamma(\delta)}
\,\chi_{\{t \in \mathbb{Q}_p : |t|_p > |s|_p\}}(t)|t|_p^{-\nu(\gamma+\delta)+\nu-1}
\left(|t|_p^\nu-|s|_p^\nu\right)^{\delta-1}, 
\end{align}
satisfies the homogeneity condition \eqref{Homoge}. Moreover,
\begin{align*}
C_{p,\beta,q}
&=(1-p^{-1})\sum_{j\in\mathbb{Z}} J(1,p^j)\, p^{j\left(1-\beta-\frac{1}{q}\right)}\\
&=\frac{(1-p^{-1})}{\Gamma(\delta)}
\sum_{j>0}
\left(p^{j\nu}-1\right)^{\delta-1}
p^{j\left(\nu(\gamma+1)-\nu+1\right)}
p^{j\left(1-\beta-\frac{1}{q}\right)}\\
&=\frac{(1-p^{-1})}{\Gamma(\delta)}
\sum_{j=1}^{\infty}
p^{j\nu(\delta-1)}
p^{-j\left(\nu(\gamma+\delta)-\nu+1\right)}
p^{j\left(1-\beta-\frac{1}{q}\right)}.
\end{align*}

As for large $j$, we have $p^{j\nu}-1 \sim p^{j\nu}$. Therefore
\begin{align*}
C_{p,\beta,q}
&=\frac{(1-p^{-1})}{\Gamma(\delta)}
\sum_{j=1}^{\infty}
p^{j\left(\nu\delta-\nu-\nu \gamma-\nu\delta+\nu-1+1-\beta-\frac{1}{q}\right)}\\
&=\frac{(1-p^{-1})}{\Gamma(\delta)}
\sum_{j=1}^{\infty}
p^{j\left(-\nu \gamma-\beta-\frac{1}{q}\right)}\\
&=\frac{(1-p^{-1})}{\Gamma(\delta)}
\sum_{j=1}^{\infty}
p^{-j\left(\nu \gamma+\beta+\frac{1}{q}\right)} < \infty.
\end{align*}
Since this is a convergent geometric series and $\nu \gamma+\beta>-\frac{1}{q}$, it follows that the constant $C_{p,\beta,q}$is finite.
Therefore, by Theorem~\ref{thm:main}, the operator $J^{\nu}_{\gamma,\delta}$ is bounded on the fractional Ces\`aro--Morrey space $C^{\beta}M_{r,q}(\mathbb{Q}_p)$.
\end{proof}

%%%%%%%%%%%%%%%%%%%%%%%%%%%%%%%%%%%%%%%%%%%%%%%%%%%%%%%%%%%%%%%%%%%%%%%%%%%%%%%%%

\section{$p$-adic Hadamard type Fractional integrals}
In this section, we introduce the $p$-adic analogues of Hadamard-type fractional integral operators, including both the left-sided and right-sided forms. These operators extend the classical Hadamard fractional integrals to the non-Archimedean setting of $\mathbb{Q}_p$ in a natural way. Our aim is to analyze their behavior within the framework of fractional Ces\`aro--Morrey spaces. In particular, as an application of Theorem~\ref{thm:main}, we establish the mapping properties of these operators on $C^{\beta}M_{r,q}(\mathbb{Q}_p)$. For the classical definitions, notations, and mapping properties on various function spaces, we refer the reader to \cite{BP2, BP1,BP3}.
Let $\alpha>0$ and $\mu \in \mathbb{R}$. Let $f$ be a measurable function on $\mathbb{Q}_p$. 
For each $s \in \mathbb{Q}_p^{\ast}$, the left-sided and right-sided $p$-adic Hadamard-type 
fractional integral operators are defined as follows.
We define the left-sided operator by
\begin{align*}
(\mathcal{J}^{\alpha}_{0+,\mu} f)(s)
=
\int_{\mathbb{Q}_p^{\ast}} 
J^{\alpha}_{0+,\mu}(|s|_p,|t|_p)\, f(t)\,dt
\end{align*}
where the kernel is given by
\begin{align}
J^{\alpha}_{0+,\mu}(|s|_p,|t|_p)
=
\frac{1}{\Gamma(\alpha)}
\chi_{\{t\in\mathbb{Q}_p:\,|t|_p<|s|_p\}}(t)
\left(\frac{|t|_p}{|s|_p}\right)^{\mu}
\left(\log \frac{|s|_p}{|t|_p}\right)^{\alpha-1}
\frac{1}{|t|_p}.
\end{align}

Similarly, the right-sided operator is defined by
\begin{align*}
(\mathcal{J}^{\alpha}_{-,\mu} f)(s)
=
\int_{\mathbb{Q}_p^{\ast}} 
J^{\alpha}_{-,\mu}(|s|_p,|t|_p)\, f(t)\,dt
\end{align*}
where
\begin{align}
J^{\alpha}_{-,\mu}(|s|_p,|t|_p)
=
\frac{1}{\Gamma(\alpha)}
\chi_{\{t\in\mathbb{Q}_p:\,|t|_p>|s|_p\}}(t)
\left(\frac{|s|_p}{|t|_p}\right)^{\mu}
\left(\log \frac{|t|_p}{|s|_p}\right)^{\alpha-1}
\frac{1}{|t|_p}.
\end{align}

The corresponding $I$-type operators are defined as follows. The left-sided case is
\begin{align*}
(\mathcal{I}^{\alpha}_{0+,\mu} f)(s)
=
\int_{\mathbb{Q}_p^{\ast}} 
I^{\alpha}_{0+,\mu}(|s|_p,|t|_p)\, f(t)\,dt
\end{align*}
with kernel
\begin{align}
I^{\alpha}_{0+,\mu}(|s|_p,|t|_p)
=
\frac{1}{\Gamma(\alpha)}
\chi_{\{t\in\mathbb{Q}_p:\, |t|_p < |s|_p\}}(t)
\left(\frac{|t|_p}{|s|_p}\right)^{\mu}
\left(\log \frac{|s|_p}{|t|_p}\right)^{\alpha-1}
\frac{1}{|s|_p},
\end{align}
and
\begin{align*}
(\mathcal{I}^{\alpha}_{-,\mu} f)(s)
=
\int_{\mathbb{Q}_p^{\ast}} 
I^{\alpha}_{-,\mu}(|s|_p,|t|_p)\, f(t)\,dt
\end{align*}
with kernel
\begin{align}
I^{\alpha}_{-,\mu}(|s|_p,|t|_p)
=
\frac{1}{\Gamma(\alpha)}
\chi_{\{t\in\mathbb{Q}_p:\, |t|_p >|s|_p\}}(t)
\left(\frac{|s|_p}{|t|_p}\right)^{\mu}
\left(\log \frac{|t|_p}{|s|_p}\right)^{\alpha-1}
\frac{1}{|s|_p}.
\end{align}
\begin{theorem}
Let Let $1\le r\le q<\infty$, $\beta \in [0,1)$, $\alpha \in (0,\infty)$, and $\mu \in \mathbb{R}$.

\begin{itemize}
\item[(a)] If  $\mu > \beta + \frac{1}{q}$, then the left-sided Hadamard-type fractional integral operator 
$\mathcal{J}^{\alpha}_{0+,\mu} f$ is bounded on $C^{\beta} M_{r,q}(\mathbb{Q}_p)$.

\item[(b)] If  $\mu > -\beta - \frac{1}{q}$, then the right-sided Hadamard-type fractional integral operator 
$\mathcal{J}^{\alpha}_{-,\mu} f$ is bounded on $C^{\beta} M_{r,q}(\mathbb{Q}_p)$.

\item[(c)] If  $\mu > \beta + \frac{1}{q} - 1$, then the left-sided Hadamard-type fractional integral operator 
$\mathcal{I}^{\alpha}_{0+,\mu} f$ is bounded on $C^{\beta} M_{r,q}(\mathbb{Q}_p)$.

\item[(d)] If  $\mu > 1 - \beta - \frac{1}{q}$, then the right-sided Hadamard-type fractional integral operator 
$\mathcal{I}^{\alpha}_{-,\mu} f$ is bounded on $C^{\beta} M_{r,q}(\mathbb{Q}_p)$.
\end{itemize}
\end{theorem}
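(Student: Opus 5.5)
The plan is to reduce each of the four statements to Theorem~\ref{thm:main}. For each kernel we check two things: the homogeneity condition \eqref{Homoge}, and finiteness of the constant $C_{p,\beta,q}=(1-p^{-1})\sum_{j\in\mathbb{Z}}\mathscr{K}(1,p^j)\,p^{j(1-\beta-1/q)}$.

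\textbf{Homogeneity.} Replacing $(|s|_p,|t|_p)$ by $(\tau|s|_p,\tau|t|_p)$ leaves the ratios $|t|_p/|s|_p$ unchanged. It therefore leaves unchanged the characteristic functions, the powers $(\cdot)^{\mu}$ and the logarithmic factors. The only non-invariant factor is $1/|t|_p$ (for $J^{\alpha}_{0+,\mu}$ and $J^{\alpha}_{-,\mu}$) or $1/|s|_p$ (for $I^{\alpha}_{0+,\mu}$ and $I^{\alpha}_{-,\mu}$), and it contributes exactly $\tau^{-1}$. So all four kernels are homogeneous of degree $-1$.

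\textbf{The sums.} Put $s=1$, $|t|_p=p^j$. Because the inequalities in the characteristic functions are strict, the term $j=0$, where $\log 1=0$, never appears. This matters when $\alpha<1$, since then $(\log\cdot)^{\alpha-1}$ would blow up there. The four cases are as follows.
\begin{itemize}
\item[(a)] Only $j<0$ survives. Writing $j=-k$ with $k\ge1$, the summand becomes
\[
\frac{1}{\Gamma(\alpha)}\,(k\log p)^{\alpha-1}\,p^{-k(\mu-\beta-1/q)}.
\]
\item[(b)] Only $j>0$ survives, and the summand is
\[
\frac{1}{\Gamma(\alpha)}\,(j\log p)^{\alpha-1}\,p^{-j\mu}\,p^{-j}\,p^{j(1-\beta-1/q)}=\frac{1}{\Gamma(\alpha)}\,(j\log p)^{\alpha-1}\,p^{-j(\mu+\beta+1/q)}.
\]
\item[(c)] The factor $1/|s|_p$ equals $1$, and with $j=-k$ the summand is
\[
\frac{1}{\Gamma(\alpha)}\,(k\log p)^{\alpha-1}\,p^{-k(\mu+1-\beta-1/q)}.
\]
\item[(d)] With $j>0$ the summand is
\[
\frac{1}{\Gamma(\alpha)}\,(j\log p)^{\alpha-1}\,p^{-j(\mu-1+\beta+1/q)}.
\]
\end{itemize}

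\textbf{Convergence.} Each sum has the form $\sum_{k\ge1}k^{\alpha-1}p^{-k\theta}$. Such a series converges if and only if $\theta>0$, for example by the ratio test or by comparison with $\sum p^{-k\theta/2}$, since $k^{\alpha-1}p^{-k\theta/2}\to0$. The conditions $\theta>0$ are exactly $\mu>\beta+\frac1q$, $\mu>-\beta-\frac1q$, $\mu>\beta+\frac1q-1$ and $\mu>1-\beta-\frac1q$ in cases (a)--(d).

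Theorem~\ref{thm:main} then gives boundedness, with the constant $C_{p,\beta,q}$ computed above.

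The only real obstacle is the polynomial factor $(k\log p)^{\alpha-1}$, which is unbounded when $\alpha>1$. Unlike the earlier applications, the sum is therefore not a pure geometric series, and a genuine comparison argument is required. One could also give a closed form via the polylogarithm $\mathrm{Li}_{1-\alpha}(p^{-\theta})$, but finiteness is all Theorem~\ref{thm:main} needs.
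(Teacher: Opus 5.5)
Your proposal is correct and follows the paper's proof essentially step for step. You check degree $-1$ homogeneity of each kernel, reduce $C_{p,\beta,q}$ to a series $\sum_{k\ge1}k^{\alpha-1}p^{-k\theta}$ with the same four exponents $\theta$ the paper obtains, and then invoke Theorem~\ref{thm:main}. The differences are cosmetic: you use the ratio test or comparison with $\sum p^{-k\theta/2}$ where the paper cites the Cauchy root test, and you state explicitly that the strict inequalities exclude the $j=0$ term, which the paper leaves implicit.
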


\begin{proof}

\begin{enumerate}

\item[(a)]  For any $a \in \mathbb{Q}_p^{\ast}$, we obtain
\begin{align*}
J^{\alpha}_{p+,\mu}(|as|_p,|at|_p)
&=
\frac{1}{\Gamma(\alpha)}
\chi_{\{at\in\mathbb{Q}_p:\, |at|_p < |as|_p\}}(at)
\left(\frac{|at|_p}{|as|_p}\right)^{\mu}
\left(\log \frac{|as|_p}{|at|_p}\right)^{\alpha-1}
\frac{1}{|at|_p} \\
&=
\frac{1}{|a|_p}
\frac{1}{\Gamma(\alpha)}
\chi_{\{t\in\mathbb{Q}_p:\, |t|_p < |s|_p\}}(t)
\left(\frac{|t|_p}{|s|_p}\right)^{\mu}
\left(\log \frac{|s|_p}{|t|_p}\right)^{\alpha-1}
\frac{1}{|t|_p} \\
&=
|a|_p^{-1} J^{\alpha}_{p+,\mu}(|s|_p,|t|_p).
\end{align*}

Thus, the kernel satisfies the homogeneity condition. Therefore, it remains to show that $C_{p,\beta,q} < \infty$. Now,
\begin{align*}
C_{p,\beta,q}
&=
(1-p^{-1}) \sum_{j\in\mathbb{Z}} J^{\alpha}_{p+,\mu}(1,p^j)\, p^{j\left(1-\beta-\frac{1}{q}\right)} \\
&=
\frac{1-p^{-1}}{\Gamma(\alpha)} 
\sum_{j\in\mathbb{Z}} 
\chi_{B(0,1)}(p^j)\,(p^j)^{\mu}
\left(\log \frac{1}{p^j}\right)^{\alpha-1}
\frac{1}{p^j}
\, p^{j\left(1-\beta-\frac{1}{q}\right)} \\
&=
\frac{1-p^{-1}}{\Gamma(\alpha)} 
\sum_{j=-\infty}^{-1}
(-j\log p)^{\alpha-1}
\, p^{j\left(\mu-\beta-\frac{1}{q}\right)}.
\end{align*}

Setting $j=-k$, we obtain
\begin{align*}
C_{p,\beta,q}
&=
\frac{1-p^{-1}}{\Gamma(\alpha)} 
(\log p)^{\alpha-1}
\sum_{k=1}^{\infty}
k^{\alpha-1}
\, p^{-k\left(\mu-\beta-\frac{1}{q}\right)}.
\end{align*}

The above series is convergent by the Cauchy root test whenever 
 $\mu > \beta + \frac{1}{q}$. Hence,
\[
C_{p,\beta,q} < \infty.
\]

This completes the proof of part (a).

\item[(b)] 
Let $a \in \mathbb{Q}_p^{\ast}$. Consider the kernel
\begin{align*}
J^{\alpha}_{-,\mu}(|s|_p,|t|_p)
=
\frac{1}{\Gamma(\alpha)} \,
\chi_{\{t\in\mathbb{Q}_p:\, |t|_p > |s|_p\}}(t)
\left(\frac{|s|_p}{|t|_p}\right)^{\mu}
\left(\log \frac{|t|_p}{|s|_p}\right)^{\alpha-1}
\frac{1}{|t|_p}.
\end{align*}

Then we have
\begin{align*}
J^{\alpha}_{-,\mu}(|as|_p,|at|_p)
=
\frac{1}{\Gamma(\alpha)} \,
\chi_{\{at\in\mathbb{Q}_p:\, |at|_p > |as|_p\}}(at)
\left(\frac{|as|_p}{|at|_p}\right)^{\mu}
\left(\log \frac{|at|_p}{|as|_p}\right)^{\alpha-1}
\frac{1}{|at|_p}.
\end{align*}

Using the multiplicative property of the $p$-adic norm, this becomes
\begin{align*}
J^{\alpha}_{-,\mu}(|as|_p,|at|_p)
&=
\frac{1}{|a|_p} \,
\frac{1}{\Gamma(\alpha)} \,
\chi_{\{t\in\mathbb{Q}_p:\, |t|_p > |s|_p\}}(t)
\left(\frac{|s|_p}{|t|_p}\right)^{\mu}
\left(\log \frac{|t|_p}{|s|_p}\right)^{\alpha-1}
\frac{1}{|t|_p}\\
&=
|a|_p^{-1} J^{\alpha}_{-,\mu}(|s|_p,|t|_p).
\end{align*}

Hence, the kernel satisfies the homogeneity condition.

Now,
\begin{align*}
C_{p,\beta,q}
&=
(1-p^{-1}) \sum_{j\in\mathbb{Z}} J^{\alpha}_{-,\mu}(1,p^j)\, p^{j\left(1-\beta-\frac{1}{q}\right)} \\
&=
\frac{1-p^{-1}}{\Gamma(\alpha)}
\sum_{j>0}
\left(\frac{1}{p^j}\right)^{\mu}
\left(\log p^j\right)^{\alpha-1}
\frac{1}{p^j}
\, p^{j\left(1-\beta-\frac{1}{q}\right)} \\
&=
\frac{1-p^{-1}}{\Gamma(\alpha)}
\sum_{j=1}^{\infty}
p^{-j\mu} (j\log p)^{\alpha-1} p^{j\left(1-\beta-\frac{1}{q}-1\right)} \\
&=
\frac{1-p^{-1}}{\Gamma(\alpha)} (\log p)^{\alpha-1}
\sum_{j=1}^{\infty}
j^{\alpha-1} p^{-j\left(\mu+\beta+\frac{1}{q}\right)}.
\end{align*}

The above series converges by the Cauchy root test whenever $\mu > -\beta - \frac{1}{q}$. Hence,
\begin{align*}
C_{p,\beta,q} < \infty.
\end{align*}

This completes the proof of part (b).

\item[(c)]

Let $a \in \mathbb{Q}_p^{\ast}$. Then
\begin{align*}
I^{\alpha}_{0+,\mu}(|s|_p,|t|_p)
&=
\frac{1}{\Gamma(\alpha)} \,
\chi_{\{t\in\mathbb{Q}_p:\, |t|_p < |s|_p\}}(t)
\left(\frac{|t|_p}{|s|_p}\right)^{\mu}
\left(\log \frac{|s|_p}{|t|_p}\right)^{\alpha-1}
\frac{1}{|s|_p}.
\end{align*}

Now,
\begin{align*}
I^{\alpha}_{0+,\mu}(|as|_p,|at|_p)
&=
\frac{1}{\Gamma(\alpha)} \,
\chi_{\{at\in\mathbb{Q}_p:\, |at|_p < |as|_p\}}(at)
\left(\frac{|at|_p}{|as|_p}\right)^{\mu}
\left(\log \frac{|as|_p}{|at|_p}\right)^{\alpha-1}
\frac{1}{|as|_p}.
\end{align*}

Using the multiplicative property of the $p$-adic norm, we obtain
\begin{align*}
I^{\alpha}_{0+,\mu}(|as|_p,|at|_p)
&=
\frac{1}{|a|_p}
\frac{1}{\Gamma(\alpha)} \,
\chi_{\{t\in\mathbb{Q}_p:\, |t|_p < |s|_p\}}(t)
\left(\frac{|t|_p}{|s|_p}\right)^{\mu}
\left(\log \frac{|s|_p}{|t|_p}\right)^{\alpha-1}
\frac{1}{|s|_p} \\
&=
|a|_p^{-1} I^{\alpha}_{0+,\mu}(|s|_p,|t|_p).
\end{align*}
Hence, the kernel $I^{\alpha}_{0+,\mu}(|s|_p,|t|_p)$ satisfies the homogeneity condition.

Furthermore,
\begin{align*}
C_{p,\beta,q}
&=
(1-p^{-1}) \sum_{j\in\mathbb{Z}} I^{\alpha}_{0+,\mu}(1,p^j)\, p^{j\left(1-\beta-\frac{1}{q}\right)} \\
&=
\frac{1-p^{-1}}{\Gamma(\alpha)}
\sum_{j\in\mathbb{Z}}
\chi_{B(0,1)}(p^j)\,(p^j)^{\mu}
\left(\log \frac{1}{p^j}\right)^{\alpha-1}
\, p^{j\left(1-\beta-\frac{1}{q}\right)} \\
&=
\frac{1-p^{-1}}{\Gamma(\alpha)}
\sum_{j<0}
p^{j\mu} (-j\log p)^{\alpha-1}
\, p^{j\left(1-\beta-\frac{1}{q}\right)} \\
&=
\frac{1-p^{-1}}{\Gamma(\alpha)} (\log p)^{\alpha-1}
\sum_{j=-\infty}^{-1}
(-j)^{\alpha-1}
\, p^{j\left(1+\mu-\beta-\frac{1}{q}\right)} \\
&=
\frac{1-p^{-1}}{\Gamma(\alpha)} (\log p)^{\alpha-1}
\sum_{j=1}^{\infty}
j^{\alpha-1}
\, p^{-j\left(1+\mu-\beta-\frac{1}{q}\right)}.
\end{align*}

The above series is convergent. Since $\mu > \beta + \frac{1}{q} - 1$, by the Cauchy root test we obtain the convergence of the series, and hence
\begin{align*}
C_{p,\beta,q} < \infty.
\end{align*}

\item[(d)]

In a similar way, it can be easily seen that the kernel $I^{\alpha}_{-,\mu}(|s|_p,|t|_p)$ satisfies the homogeneity condition. 
Therefore, it remains to show only the finiteness of the constant $C_{p,\beta,q}$.

We have
\begin{align*}
C_{p,\beta,q}
&=
(1-p^{-1}) \sum_{j\in\mathbb{Z}} I^{\alpha}_{-,\mu}(1,p^j)\, p^{j\left(1-\beta-\frac{1}{q}\right)} \\
&=
\frac{1-p^{-1}}{\Gamma(\alpha)} 
\sum_{j>0}
\left(\frac{1}{p^j}\right)^{\mu}
\left(\log (p^j)\right)^{\alpha-1}
\, p^{j\left(1-\beta-\frac{1}{q}\right)} \\
&=
\frac{1-p^{-1}}{\Gamma(\alpha)} 
\sum_{j=1}^{\infty}
p^{-j\mu} (\log p)^{\alpha-1} j^{\alpha-1} p^{j\left(1-\beta-\frac{1}{q}\right)} \\
&=
\frac{1-p^{-1}}{\Gamma(\alpha)} (\log p)^{\alpha-1}
\sum_{j=1}^{\infty}
j^{\alpha-1} p^{j\left(1-\beta-\frac{1}{q}-\mu\right)} \\
&=
\frac{1-p^{-1}}{\Gamma(\alpha)} (\log p)^{\alpha-1}
\sum_{j=1}^{\infty}
j^{\alpha-1} p^{-j\left(\mu+\beta+\frac{1}{q}-1\right)}.
\end{align*}

The given series is convergent by the Cauchy root test. Since 
\[
\mu > 1 - \beta - \frac{1}{q},
\]
we obtain the convergence of the series, and hence
\begin{align*}
C_{p,\beta,q} < \infty.
\end{align*}

This completes the proof of part (d).

\end{enumerate}

\end{proof}
\section*{Ethics declarations}
\subsection*{Conflict of interest}  The authors declare no conflict of interests.

\bibliographystyle{abbrv}
\bibliography{references}

\end{document}